\begin{document}

\FirstPageHeading

\ShortArticleName{Affine bundles are affine spaces over modules}

\ArticleName{Affine bundles are affine spaces over modules}

\Author{Thomas LEUTHER}

\AuthorNameForHeading{T.~Leuther}

\Address{Institute of Mathematics, Grande Traverse 12, B-4000 Li\`ege, Belgium}
\Email{thomas.leuther@ulg.ac.be}

\Abstract{
We show that the category of affine bundles over a smooth manifold $M$ is equivalent to the category of affine spaces modelled on projective finitely generated ${\rm C}^\infty(M)$-modules. 
Using this equivalence of categories, we are able to give an alternate proof of the main result of \cite{llz}, showing that the characterization of vector bundles by means of their Lie algebras of homogeneous differential operators also holds for vector bundles of rank $1$ and over any base manifolds. 
}

\Keywords{affine bundle; vector bundle; affine space; module}

\Classification{14R10, 55R25} 

\section{Introduction}

Affine bundles appears in differential geometry mainly from the geometry of a frame-independent formulation of classical mechanics (see for instance \cite{ggu, ggu2,ggu3,gur,u,m,mms,bt,b,popescu,popescu2}) and in the study of jet bundles (see \cite{saunders,gm}). 

As affine spaces are modelled on vector spaces, each affine bundle has an underlying vector bundle. This being said, remember (see \cite{swan,nestruev}) that the functor of global sections provides an equivalence of categories between vector bundles over a smooth manifold $M$ and projective finitely generated ${\rm C}^\infty(M)$-modules. 

In this text, we first emphasize the similarities between affine bundles and affine spaces over modules using the language of torsors (see \cite{fga,giraud}). Then we show that the category of affine bundles over a smooth manifold $M$ is equivalent to the category of affine spaces modelled on projective finitely generated ${\rm C}^\infty(M)$-modules. 

Finally, by means of this equivalence of categories and of the concept of \emph{smooth envelope} of an ${\mathbb R}$-algebra (see \cite{nestruev}), we are able to give an alternate proof of the main result of \cite{llz}, showing that the characterization of vector bundles by means of their Lie algebras of homogeneous differential operators (resp. by means of their $ {\mathbb R}$-algebras of fiberwise polynomial functions) also holds for vector bundles of rank $1$ and without any topological assumption about the base manifolds.

\section{Affine spaces}

First, we translate the definitions of affine spaces modelled on vector spaces and affine maps between them into the language of torsors. This allows us to extend in a natural way the definition to \emph{affine spaces over arbitrary modules}.

\subsection{Affine spaces modelled on vector spaces}

Let $V$ be a vector space. Remember that an \emph{affine space} modelled on $V$ is a (nonempty) set $A$ on which $V$ acts freely and transitively, i.e. together with a map
\[
{\rm t} : V \times A \to A : (v,a) \mapsto {\rm t}_v(a)
\]
such that for any two $a_0, a \in A$, there is a unique $v \in V$ such that $a = {\rm t}(v,a_0)$. We often write $a+v$ instead of ${\rm t}_v(a)$ and $a-a_0$ for the unique $v$ ``moving'' $a_0$ to $a$.

Note that the above definition of an affine space makes no use of scalar multiplication in the vector space. What is required is a free and transitive \emph{group} action: an affine space is nothing but a \emph{torsor} under the action of a special kind of group (a vector space).
We are thus dealing with torsors under the action of groups with an additionnal structure: scalar multiplication. Morphisms of affine spaces are morphisms of torsors that are compatible with this richer structure: if $V$ and $V'$ are vector spaces and if $A$ and $A'$ be affine spaces respectively modelled on $V$ and $V'$, a map $f : A \to A'$ is called an \emph{affine map} if there is a linear map $\vec{f} : V \to V'$ such that the induced action of $V$ on $A'$, 
  \[
  V \times A' \to A' : (v,a') \mapsto \vec{f}(v) \cdot a' \; ,
  \]
  turns $f : A \to A'$ into a $V$-morphism of torsors, i.e., $ f(a_0 + v) = f(a_0) + \vec{f}(v)$  for all $a_0 \in A$ and all $v \in V$.

\subsection{Affine spaces modelled on modules}%

In view of what preceeds, nothing prevents us from defining affine spaces modelled on modules over \emph{arbitrary} commutative rings.

\begin{definition}
Let $R$ be a commutative ring and $P$ be an $R$-module. An \emph{$R$-affine space} modelled on $P$ is a $P$-torsor, i.e., a set $A$ together with a free and transitive group action
\[ 
{\rm t} : P \times A \to A : (p,a) \mapsto {\rm t}_p(a) \; .
\]
We also write $a+p$ instead of ${\rm t}_p(a)$ and $a-a_0$ for the unique $p$ ``moving'' $a_0$ to $a$.
\end{definition}

\begin{definition}
Let $R$ be a commutative ring, $P$ and $P'$ be $R$-modules\footnote{Since we consider modules over a commutative ring, we don't distinguish between left and right modules.}. If $A$ and $A'$ are affine spaces respectively modelled on $P$ and $P'$, a map $f : A \to A'$ is called an \emph{$R$-affine map} if there is an $R$-linear map $\vec{f} : P \to P'$ (the \emph{$R$-linear part} of $f$) such that the induced action of $P$ on $A'$, 
  \[
  P \times A' \to A' : (p,a') \mapsto \vec{f}(p) \cdot a' \; ,
  \]
  turns $f : A \to A'$ into a $P$-morphism of torsors. 
\end{definition}

The composition of $R$-affine maps is obviously an $R$-affine map. Also the identity maps ${\rm id}_A : A \to A$ are $R$-affine maps. We denote by ${\rm AS}({\rm Mod\ } R)$ the category of $R$-affine spaces and $R$-affine maps between them. 

\begin{remark}
The category ${\rm AS}({\rm Mod\ } R)$ is \emph{not} a subcategory of the category of torsors because $R$-affine maps are not necessarily equivariant maps. Roughly speaking, equivariance is ``shifted'' by the $R$-linear map $\vec{f}$.
\end{remark}

\section{Vector bundles}


\begin{definition}
Let $M$ be a connected $m$-dimensional smooth manifold. A fiber bundle $\eta : E \to M$ is a \emph{vector bundle} if
\begin{enumerate}
  \item there is vector space $V$ ((the \emph{typical fiber}) such that for any $x \in M$, the fiber $\eta_x$ is a vector space isomorphic to $V$ ;
  \item the base manifold $M$ is entirely covered with the domains $U$ of fiberwise linear local trivializations $\Phi_U : \eta^{-1}(U)
 \stackrel{\sim}{\longrightarrow} U \times V$:
  \[
  \xymatrix{
  \eta^{-1}(U) \ar[rr]^{\Phi_U} \ar[dr]_{\eta} && U \times V \ar[dl]^{{\rm pr}_1} \\
  & U &
  }
  \]
\end{enumerate}
\end{definition}

Any vector bundle $\eta : E \to M$ admits a canonical global section: the \emph{zero section}. The latter associates to any point $x \in M$ the origin $0_x$ of the vector space $\eta_x$.

\begin{remark}\label{prop:total space reached by global sections}
Any point in the total space of a vector bundle $\eta : E \to M$ can be reached by a \emph{global} section. Indeed, any point in the total space of a fiber bundle can be reached by a local section (see \cite{Husemoller}) while multiplication by a ``bump'' function on $M$ allows use to globalize such a local section.
\end{remark}

\begin{proposition}
Any vector bundle is a group object in the category ${\rm FB}(M)$ of fiber bundles over $M$.
\end{proposition}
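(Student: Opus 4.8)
The plan is to exhibit the group-object structure by specifying the multiplication, unit, and inverse morphisms in the category of fiber bundles over $M$, and then checking that they satisfy the group-object axioms (associativity, unit, inverse) as commuting diagrams. Recall that a group object in a category with finite products requires a product; in ${\rm FB}(M)$ the relevant product is the fibered product over $M$. So first I would note that ${\rm FB}(M)$ has finite products given by the fibered product $E \times_M E$, with the terminal object being the identity bundle ${\rm id}_M : M \to M$.

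Next I would define the three structure morphisms fiberwise, using the vector space structure already present on each fiber $\eta_x$. The multiplication $\mu : E \times_M E \to E$ sends a pair $(e, e')$ lying over the same point $x$ to the sum $e + e'$ computed in the vector space $\eta_x$; the unit morphism $\epsilon : M \to E$ is precisely the zero section $x \mapsto 0_x$; and the inversion $\iota : E \to E$ sends $e \in \eta_x$ to its additive inverse $-e$. Each of these is defined pointwise by operations that already exist on the fibers, so they are automatically compatible with the projection $\eta$, i.e. they are morphisms \emph{over} $M$.

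With the morphisms in hand, the remaining work is to verify that they are in fact smooth (morphisms in ${\rm FB}(M)$, not merely set-theoretic maps) and that they satisfy the group-object diagrams. For smoothness I would argue locally: over the domain $U$ of a fiberwise linear trivialization $\Phi_U : \eta^{-1}(U) \xrightarrow{\sim} U \times V$, the operations $\mu$, $\epsilon$, $\iota$ are transported to the maps $((u,v),(u,v')) \mapsto (u, v+v')$, $u \mapsto (u,0)$, and $(u,v) \mapsto (u,-v)$ on $U \times V$, which are manifestly smooth since addition and negation on the fixed vector space $V$ are smooth. Since smoothness is a local condition and the trivializing domains cover $M$, the global maps are smooth. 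The group-object axioms then reduce, fiber by fiber, to the ordinary abelian group axioms for the vector space $(\eta_x, +, 0_x, -)$, which hold by definition of vector bundle.

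I expect the main obstacle to be purely organizational rather than substantive: one must be careful that all the structure maps are genuine morphisms \emph{in the category} ${\rm FB}(M)$, which means checking both that they commute with the bundle projections (immediate, since the operations preserve fibers) and that they are smooth (handled by the local trivialization argument above). Once smoothness is established locally and the fibered-product description of the categorical product is fixed, the verification of the axioms is entirely fiberwise and inherits directly from the vector-space structure, so no genuine difficulty arises there.
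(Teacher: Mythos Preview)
Your proposal is correct and follows essentially the same approach as the paper: define the three structure maps $(m,\epsilon,\mathrm{inv})$ fiberwise from the vector-space operations, note they are smooth fiber-preserving maps, and observe that the group-object axioms hold because they hold fiberwise. You supply more detail than the paper (the local-trivialization argument for smoothness and the explicit identification of products and terminal object in $\mathrm{FB}(M)$), but the strategy is identical.
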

\begin{proof}
Define a triple $(m, {\rm inv}, \epsilon)$ as follows:
\[
\begin{array}{rcl}
m & : & E \times_M E \to E : (e,e') \mapsto e + e' \in \eta_x \; ;\\
&&\\
\epsilon & : & M \to E : x \mapsto 0_{x} \in \eta_x \; ;\\
&&\\
{\rm inv} & : & E \to E : e \mapsto - e \in \eta_x \; .\\
\end{array}
\]
These maps are smooth and fiber-preserving. Moreover, they define a group structure on each fiber since they are build fiberwise from vector space structures. It follows that $\eta : E \to M$ is indeed a (particular kind of) group object in ${\rm FB}(M)$.
\end{proof}

%


Vector bundles are fiber bundles with a richer structure: fibers are vector spaces. A morphism of vector bundles is not any morphism of fiber bundles: it must be compatible with the vector structure.

\begin{definition}
An \emph{$M$-morphism of vector bundles} from $\eta : E \to M$ to $\eta'
: E' \to M$ is a fiberwise linear $M$-morphism of fiber bundles, i.e. an
$M$-morphism such that all restrictions ${\left.f\right|}_{\eta_x} : \eta_x \to
\eta'_x$ are linear maps.
\end{definition}

Vector bundles over $M$ and $M$-morphisms between them form a category, denoted by ${\rm VB}(M)$. This category is well-known to be equivalent to a certain category of modules: 
\begin{proposition}
The functor of global sections $\Gamma$, from the category ${\rm VB}(M)$ of vector bundles over $M$ to the category ${\rm Mod}_{\text{pfg}}\ {\rm C}^\infty(M)$ of projective finitely generated ${\rm C}^\infty(M)$-modules is an equivalence of categories.
\end{proposition}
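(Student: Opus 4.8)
The plan is to verify that $\Gamma$ is well-defined as a functor into ${\rm Mod}_{\text{pfg}}\ {\rm C}^\infty(M)$ and then to establish the equivalence by showing that $\Gamma$ is fully faithful and essentially surjective. For well-definedness, the set $\Gamma(E)$ of global smooth sections carries a ${\rm C}^\infty(M)$-module structure via pointwise addition and pointwise multiplication by smooth functions, and an $M$-morphism $f : E \to E'$ induces the module map $s \mapsto f \circ s$. The module $\Gamma(E)$ is finitely generated and projective because any vector bundle over the finite-dimensional manifold $M$ embeds as a subbundle of a trivial bundle $M \times {\mathbb R}^n$ (a finite trivializing cover exists by the dimension of $M$), so that $\Gamma(E)$ is a direct summand of the free module ${\rm C}^\infty(M)^n = \Gamma(M \times {\mathbb R}^n)$.

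For faithfulness, suppose two $M$-morphisms $f, g : E \to E'$ induce the same map on global sections. By Remark~\ref{prop:total space reached by global sections}, every point of $E$ lies in the image of some global section $s$; since $f \circ s = g \circ s$, the morphisms agree at that point, hence $f = g$. For fullness, given a ${\rm C}^\infty(M)$-linear map $\phi : \Gamma(E) \to \Gamma(E')$, I would first prove that $\phi$ is \emph{local}: the value $\phi(s)(x)$ depends only on the value $s(x)$. This rests on the fact that ${\rm C}^\infty(M)$-linearity, combined with multiplication by bump functions supported near $x$, forces $\phi$ to carry sections vanishing at $x$ to sections vanishing at $x$. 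Locality then lets me define a fiberwise linear map $f_x : \eta_x \to \eta'_x$ by $f_x(s(x)) = \phi(s)(x)$; smoothness of the resulting $f : E \to E'$ follows from applying $\phi$ to a local frame, and by construction $\Gamma(f) = \phi$.

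For essential surjectivity, let $P$ be a projective finitely generated ${\rm C}^\infty(M)$-module. Choosing a surjection ${\rm C}^\infty(M)^n \to P$ that splits, by projectivity, exhibits $P$ as the image of an idempotent ${\rm C}^\infty(M)$-linear endomorphism $e$ of ${\rm C}^\infty(M)^n$. By the fullness just established, $e$ is realized by a bundle endomorphism $\hat{e}$ of the trivial bundle $M \times {\mathbb R}^n$ which is a fiberwise projector; its image is a subbundle $E \subseteq M \times {\mathbb R}^n$ (the rank is locally constant, hence constant on the connected manifold $M$) with $\Gamma(E) \cong {\rm im}\, e \cong P$.

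The main obstacle is the locality step in the proof of fullness: everything hinges on translating algebraic ${\rm C}^\infty(M)$-linearity into the pointwise, geometric statement that $\phi$ comes from a bundle map, and this is precisely where the smooth structure — through the existence of bump functions and local frames — is essential. Once locality is secured, the fiberwise linearity and smoothness of the induced morphism, as well as the subbundle construction in essential surjectivity, are routine.
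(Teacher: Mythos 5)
Your proposal is correct, but it takes a different route from the paper in the trivial sense that the paper offers no proof at all: it simply cites Swan and Nestruev for this (Serre--Swan type) equivalence. What you have written is essentially the standard proof from those references, and all the key steps are in place: faithfulness via the fact that every point of the total space is hit by a global section (the paper's Remark~\ref{prop:total space reached by global sections}); fullness via the locality of ${\rm C}^\infty(M)$-linear maps, established with bump functions, which is indeed the crux of the whole theorem; and essential surjectivity by writing a projective finitely generated module as the image of an idempotent endomorphism of a free module, realizing that idempotent (using the fullness already proved) as a fiberwise projector on a trivial bundle, and observing that its rank, being the trace of an idempotent, is integer-valued and continuous, hence locally constant, so the image is a genuine subbundle. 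The one point worth making fully explicit is the well-definedness step, namely that $\Gamma(E)$ really is finitely generated projective for \emph{every} vector bundle over a possibly non-compact $M$: the embedding of $E$ into a trivial bundle $M\times{\mathbb R}^n$ rests on the existence of a finite cover of $M$ by open sets over each of which $E$ trivializes, which in turn uses that an $m$-dimensional (second countable, paracompact) manifold admits an atlas refinable to $m+1$ such sets; you gesture at this correctly, and it is the only place where a topological hypothesis on $M$ enters. In short, your argument is sound and self-contained where the paper defers entirely to the literature.
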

\begin{proof}
See  \cite{swan,nestruev}.
\end{proof}

\begin{remark}
It is not useful here to enter into details about the notion of projective finitely generated modules. The interested reader is invited to consult \cite{swan,nestruev}.
\end{remark}

\section{Affine bundles}


\begin{definition}
Let $M$ be a connected $m$-dimensional smooth manifold and $\eta : E \to M$ be a vector bundle. A fiber bundle $\pi : Z \to M$ is an \emph{affine bundle} modelled on $\eta$ if
\begin{enumerate}
  \item there is an affine space $A$ (the \emph{typical fiber}, modelled on the typical fiber $V$ of $\eta$) such that for any $x \in M$, the fiber $\pi_x$ is an affine space modelled on $\eta_x$, isomorphic to $A$ ;
  \item the base manifold $M$ is entirely covered with the domains $U$ of fiberwise affine local trivializations $\Phi_U : \pi^{-1}(U)
 \stackrel{\sim}{\longrightarrow} U \times A$:
  \[
  \xymatrix{
  \pi^{-1}(U) \ar[rr]^{\Phi_U} \ar[dr]_{\pi} && U \times A \ar[dl]^{{\rm pr}_1} \\
  & U &
  }
  \]
\end{enumerate}
\end{definition}

\begin{remark}
Since fibers in an affine bundle are affine spaces, they don't come with a preferred origin. Consequently, no canonical global section shows up. However, since the fibers are diffeomorphic to a euclidean space, it follows from \cite{steenrod} that any affine bundle admits global sections.
\end{remark}

\begin{proposition}
Any affine bundle $\pi : Z \to M$ modelled on $\eta : E \to M$ is a torsor under the action of the group object $\eta$ in the category ${\rm FB}(M)$.
\end{proposition}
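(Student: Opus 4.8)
The plan is to verify that an affine bundle $\pi : Z \to M$ modelled on $\eta : E \to M$ carries a free and transitive action of the group object $\eta$, working entirely within the category ${\rm FB}(M)$. Recall that a torsor under a group object $\eta$ requires a morphism of fiber bundles ${\rm t} : E \times_M Z \to Z$ over $M$ which (i) is compatible with the group structure $(m,{\rm inv},\epsilon)$ of $\eta$ established in the previous proposition, and (ii) is such that the induced map $(\,{\rm t}, {\rm pr}_2\,) : E \times_M Z \to Z \times_M Z$ is an isomorphism. The key observation is that everything can be defined \emph{fiberwise}: since each fiber $\pi_x$ is by definition an affine space modelled on the vector space $\eta_x$, the free and transitive action ${\rm t}_x : \eta_x \times \pi_x \to \pi_x$ is already given pointwise, and the problem reduces to checking that these pointwise actions assemble into a smooth morphism in ${\rm FB}(M)$.

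First I would define the candidate action map $ {\rm t} : E \times_M Z \to Z$ by setting ${\rm t}(e, z) = z + e \in \pi_x$ whenever $e \in \eta_x$ and $z \in \pi_x$ lie over the same point $x \in M$. By construction this is fiber-preserving and restricts on each fiber to the affine-space action, so the group-action axioms $z + 0_x = z$ and $(z + e) + e' = z + (e + e')$ hold fiberwise and hence globally, expressing compatibility with $\epsilon$ and $m$. Next I would verify freeness and transitivity: these are again fiberwise statements, and they follow immediately from the defining property of an affine space, namely that for any $z_0, z \in \pi_x$ there is a \emph{unique} $e \in \eta_x$ with $z = z_0 + e$. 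Thus the map $(\,{\rm t}, {\rm pr}_2\,) : E \times_M Z \to Z \times_M Z$ is a fiberwise bijection; I would then argue it is a diffeomorphism, so that $\pi$ is genuinely a torsor and not merely a set-theoretic one.

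The main obstacle, and the only part requiring real work, is \emph{smoothness}: I must show that the fiberwise-defined action ${\rm t}$ is a smooth map of manifolds (not merely smooth along fibers), and likewise that the inverse of $(\,{\rm t},{\rm pr}_2\,)$ is smooth. This is where the local trivializations in the definition of an affine bundle enter. Over a trivializing domain $U$, the fiberwise affine trivialization $\Phi_U : \pi^{-1}(U) \xrightarrow{\sim} U \times A$ and the corresponding linear trivialization of $\eta$ over $U$ transport ${\rm t}$ to the standard action $A \times V \to A$ of the typical fiber $V$ on the typical affine space $A$, which is manifestly smooth. Because the affine trivializations are required to be fiberwise affine and hence compatible with the linear ones, the transition is smooth, and smoothness of ${\rm t}$ follows by patching over a cover of $M$. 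The same local model shows that the map $(e,z) \mapsto z - z_0$ giving the inverse is smooth, completing the verification that $(\,{\rm t},{\rm pr}_2\,)$ is an isomorphism in ${\rm FB}(M)$ and hence that $\pi$ is an $\eta$-torsor.
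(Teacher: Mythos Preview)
Your proposal is correct and follows essentially the same route as the paper: define the fiberwise translation map ${\rm t}(e,z)=z+e$, verify smoothness by passing to local (affine/linear) trivializations where it becomes coordinate addition $(x^i;e^\alpha;z^\alpha)\mapsto(x^i;z^\alpha+e^\alpha)$, and conclude freeness and transitivity from the fiberwise affine-space structure. You are somewhat more thorough than the paper in explicitly checking the group-action axioms and the smoothness of the inverse of $({\rm t},{\rm pr}_2)$, but the underlying argument is identical.
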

\begin{proof}
Fiberwise translations in all fibers define a map
\begin{eqnarray*}
{\rm t} : E \times_M Z & \to     & Z \\
    (e,z)        & \mapsto & z + e \in \pi_{\pi(z)} 
\end{eqnarray*}
This map is smooth. Indeed, there exists local adapted coordinates in which ${\rm t}$ reads
\[
{{\rm t}(x^i;e^\beta;z^\alpha)} \; .
= (x^i;z^\alpha+e^\alpha)
\]
It makes $\pi : Z \to M$ a torsor under the group object $\eta : E \to M$ in the category ${\rm FB}(M)$ since by construction, the restriction of ${\rm t}$ on fibers is build from free and transitive actions.
\end{proof}

\begin{proposition}
Conversely, any torsor $\pi : Z \to M$ under the action of a vector bundle $\eta : E \to M$ is an affine bundle modelled on $\eta$.
\end{proposition}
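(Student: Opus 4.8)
The plan is to establish that a torsor $\pi : Z \to M$ under a vector bundle $\eta : E \to M$ satisfies the two defining conditions of an affine bundle: that each fiber is an affine space modelled on the corresponding fiber of $\eta$, and that $M$ is covered by domains of fiberwise affine local trivializations. The first condition is essentially immediate from the torsor structure. Being a torsor under $\eta$ means there is a smooth fiber-preserving action ${\rm t} : E \times_M Z \to Z$ whose restriction to each fiber $\pi_x$ is a free and transitive action of the vector space $\eta_x$. By the very definition recalled in Section~2, this is precisely what it means for $\pi_x$ to be an affine space modelled on $\eta_x$. So each fiber is an affine space over the corresponding fiber of the model vector bundle, with typical fiber an affine space $A$ modelled on the typical fiber $V$ of $\eta$.

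The substance of the proof is therefore in producing the fiberwise affine local trivializations. First I would use that $\pi : Z \to M$ admits local sections (it is a fiber bundle, so local sections exist through every point, and one may also invoke Remark~\ref{prop:total space reached by global sections} together with the fact that affine bundles admit global sections once the structure is known; but at this stage a local section $s : U \to Z$ over a chart $U$ suffices). Given such a local section $s$ over $U$, I would choose $U$ small enough that $\eta$ is also locally trivial over it, say via a fiberwise linear trivialization $\Psi_U : \eta^{-1}(U) \xrightarrow{\sim} U \times V$. The idea is then to transport the affine structure: define
\[
\Phi_U : \pi^{-1}(U) \to U \times V : z \mapsto \bigl(\pi(z),\, \mathrm{pr}_2 \Psi_U(z - s(\pi(z)))\bigr),
\]
where $z - s(\pi(z)) \in \eta_{\pi(z)}$ is the unique vector of the torsor structure carrying $s(\pi(z))$ to $z$. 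The local section $s$ plays the role of a local origin, converting the affine fibers into vector-space-like coordinates, and $\Psi_U$ then identifies these with the fixed model $V$.

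The key steps are then to check that $\Phi_U$ is a well-defined diffeomorphism onto $U \times V$ (which can be identified with $U \times A$ after fixing an origin in the typical affine fiber $A$) and that it is fiberwise affine. Smoothness of $\Phi_U$ follows from smoothness of $s$, of the torsor action ${\rm t}$ (and hence of the subtraction map $z \mapsto z - s(\pi(z))$, which is smooth because the action is free and transitive fiberwise with smooth local form), and of $\Psi_U$; its inverse is constructed analogously by adding $\Psi_U^{-1}$-translates back to $s$. Fiberwise, $\Phi_U$ restricted to $\pi_x$ sends $z \mapsto \Psi_U(z - s(x))$, which is an affine isomorphism of $\pi_x$ onto $V$ (viewed as an affine space over itself) because it is the composition of the affine map $z \mapsto z - s(x)$ with the linear isomorphism $\mathrm{pr}_2 \Psi_U|_{\eta_x}$.

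I expect the main obstacle to be the verification that the subtraction map $(z, z') \mapsto z - z' \in \eta_{\pi(z)}$, and in particular $z \mapsto z - s(\pi(z))$, is genuinely smooth rather than merely fiberwise well-defined. This is the point where the torsor hypothesis must be used carefully: from freeness and transitivity one gets the map set-theoretically, and smoothness should follow by reading the torsor action ${\rm t}$ in the adapted local coordinates exhibited in the preceding proposition, where ${\rm t}$ has the form $(x^i; e^\alpha; z^\alpha) \mapsto (x^i; z^\alpha + e^\alpha)$, so that subtraction is locally $(x^i; z^\alpha; z'^\alpha) \mapsto (x^i; z^\alpha - z'^\alpha)$ and is manifestly smooth. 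Once smoothness of this difference is secured, assembling $\Phi_U$ and checking the affine triangle commutes with $\mathrm{pr}_1$ is routine.
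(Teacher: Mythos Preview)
Your approach is essentially the paper's: show that the torsor action makes each fiber an affine space over $\eta_x$, then manufacture fiberwise affine trivializations by choosing a section and composing ``subtract the section'' with a linear trivialization of $\eta$. The paper states this in two sentences and uses a \emph{global} section $s_0$ (justified by the Steenrod contractible-fiber argument, since the fibers of $Z$ are affine spaces and hence diffeomorphic to Euclidean space), whereas you work with local sections; either choice is fine for producing local trivializations.

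There is, however, a genuine circularity in your justification of smoothness of the subtraction map. You invoke ``the adapted local coordinates exhibited in the preceding proposition, where ${\rm t}$ has the form $(x^i; e^\alpha; z^\alpha) \mapsto (x^i; z^\alpha + e^\alpha)$''. Those coordinates were obtained in that proposition \emph{from} the affine bundle structure on $Z$; here you are trying to \emph{establish} that structure, so you cannot assume such coordinates exist. The correct (and shorter) argument is to use the torsor hypothesis itself: by definition of a torsor in ${\rm FB}(M)$, the map
\[
{\rm t} \times {\rm id}_Z : E \times_M Z \longrightarrow Z \times_M Z, \qquad (e,z) \mapsto ({\rm t}_e(z), z)
\]
is an isomorphism in ${\rm FB}(M)$, hence a diffeomorphism. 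Its inverse, followed by projection onto the $E$-factor, is precisely the subtraction map $(z',z) \mapsto z' - z$, which is therefore smooth. With this correction in place, your construction of $\Phi_U$ goes through exactly as you describe.
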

\begin{proof}
The action making $\pi : Z \to M$ a torsor under $\eta : E \to M$ defines on each fiber $\pi_x$ the structure of an affine space over the corresponding vector space $\eta_x$. Take $A$ to be any affine space modelled on the typical
fiber $V$ of $\eta$ and build fiberwise affine trivializations from the choice of any global section $s_0 : M \to Z$ of $\pi$ and from fiberwise linear trivializations of $\eta$.
\end{proof}

\begin{definition}
Let $\pi : Z \to M$ and $\pi': Z' \to M$ be affine bundles modelled on $\eta : E \to M$. An \emph{$\eta$-morphism of affine bundles} from $\pi : Z \to M$ to $\pi' : Z' \to M$ is an $\eta$-morphism of torsors.
\end{definition}

\begin{proposition}
A smooth map $f : Z \to Z'$ is an $\eta$-morphism of affine bundles if and only if it is an $M$-morphism of fiber bundles such that all restrictions ${\left.f\right|}_{\pi_x} : \pi_x \to \pi'_x$ are $\eta_x$-affine maps.
\end{proposition}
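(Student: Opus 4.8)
The statement is an equivalence, which I would split into its two implications; the forward one is a fiberwise restriction and the backward one carries the real content. Recall that, exactly as for $R$-affine maps, an $\eta$-morphism of torsors between $\pi : Z \to M$ and $\pi' : Z' \to M$ comes with a linear part $\vec f : E \to E$ (a vector bundle morphism) for which the $\vec f$-twisted action turns $f$ into an equivariant map, i.e.\ $f(z+e) = f(z) + \vec f(e)$ for all $z \in Z$ and $e \in \eta_{\pi(z)}$. For the forward implication, such an $f$ is a map of bundles over $M$, hence fiber-preserving and smooth, i.e.\ an $M$-morphism of fiber bundles; and restricting the equivariance identity to a single fiber, with $\vec f_x := \vec f|_{\eta_x}$ linear, yields $f|_{\pi_x}(a+v) = f|_{\pi_x}(a) + \vec f_x(v)$ for all $a \in \pi_x$ and $v \in \eta_x$, which is precisely the defining property of an $\eta_x$-affine map. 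This direction is thus a direct unwinding of the definitions.

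For the backward implication I would be handed an $M$-morphism $f$ of fiber bundles whose restrictions $f|_{\pi_x}$ are $\eta_x$-affine with linear parts $\vec f_x : \eta_x \to \eta_x$, and I must assemble these into a genuine vector bundle morphism $\vec f : E \to E$. Defining $\vec f|_{\eta_x} := \vec f_x$ makes $\vec f$ fiberwise linear and fiber-preserving for free, so the only substantial point --- and the one I expect to be the main obstacle --- is the smoothness of $\vec f$. I would settle it by fixing a global section $s : M \to Z$ of $\pi$, which exists by the earlier remark, and observing that $\eta_x$-affineness gives $\vec f(e) = f\big(s(\pi(e)) + e\big) - f\big(s(\pi(e))\big)$ for every $e \in E$, where $+$ is the smooth torsor action on $Z$ and $-$ is the difference map of the torsor $Z'$. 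The latter is smooth by local triviality of the affine bundle $Z'$, while $e \mapsto f\big(s(\pi(e)) + e\big)$ and $e \mapsto f\big(s(\pi(e))\big)$ are compositions of smooth maps landing in a common fiber; hence $\vec f$ is smooth.

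Having produced a smooth vector bundle morphism $\vec f$ whose fiber restrictions are the prescribed linear parts, I would conclude by remarking that the identity $f(z+e) = f(z) + \vec f(e)$ holds on each fiber by construction and therefore on all of $Z$, so that $f$ is equivariant for the $\vec f$-twisted action and, by definition, an $\eta$-morphism of torsors. The heart of the proposition is thus the smoothness of the linear part $\vec f$, for which the local triviality of affine bundles and the smoothness of the torsor difference map are exactly what is needed; the remainder is the fiberwise transcription of the affine-space definitions carried out in both directions.
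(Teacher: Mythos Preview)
You have misread the definitions, and this makes the proposal solve a different (harder) problem than the one stated. In this paper an \emph{$\eta$-morphism of affine bundles} is by definition an $\eta$-morphism of torsors in the category ${\rm FB}(M)$, and in the appendix a $G$-morphism of torsors is simply a $G$-equivariant map: $f(z+e)=f(z)+e$, with \emph{no} linear part $\vec f$. Likewise, since both $\pi_x$ and $\pi'_x$ are torsors under the \emph{same} group $\eta_x$, an ``$\eta_x$-affine map'' here means an $\eta_x$-equivariant map $f_x(a+v)=f_x(a)+v$, i.e.\ an affine map whose linear part is the identity. With these readings the proposition collapses to a tautology: equivariance of $f$ under the $\eta$-action is a fiberwise condition, so it holds globally if and only if it holds on each fiber. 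That is exactly the paper's one-line proof; there is nothing to construct and no smoothness issue for $\vec f$, because $\vec f={\rm id}_E$.

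What you have actually argued --- assembling fiberwise linear parts $\vec f_x$ into a smooth vector bundle morphism $\vec f$ via the formula $\vec f(e)=f\big(s(\pi(e))+e\big)-f\big(s(\pi(e))\big)$ --- is precisely the content of the \emph{next} proposition in the paper, which characterizes $M$-morphisms of affine bundles (those are the ones that carry a genuine linear part). Your smoothness argument for $\vec f$ is correct and is what the paper has in mind there. So the mathematics in your proposal is sound; it is just attached to the wrong statement. For the present proposition, drop the $\vec f$ entirely and observe that the equivariance identity is fiberwise.
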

\begin{proof}
The equivariance of $f$ under the action of the vector bundle translates fiberwise into equivariance of the restrictions $f_x : \pi_x \to \pi'_x$.
\end{proof}


Affine bundles are fiber bundles with some extra structure: fibers are affine spaces.  A morphism of affine bundles is not any morphism of fiber bundles: it must be compatible with the affine structure.

\begin{definition}
Let $\pi : Z \to M$ and $\pi': Z' \to M$ be affine spaces modelled on $\eta : E \to M$ and $\eta : E' \to M$. An \emph{$M$-morphism of affine bundles} from $\pi : Z \to M$ to $\pi' : Z' \to M$ is fiberwise affine $M$-morphism of fiber bundles, i.e. an $M$-morphism such that all restrictions ${\left.f\right|}_{\pi_x} : \pi_x \to \pi'_x$ are affine maps.
\end{definition}

The following proposition shows that the above definition corresponds in some sense to comparing torsors under the action of two different vector bundles over $M$: morphisms of affine bundles have to be compatible with the structure of the underlying vector bundles.

\begin{proposition}
An $M$-morphism of fiber bundles $f : Z \to Z'$ is an $M$-morphism of affine bundles if and only if there is an $M$-morphism of vector bundles $\vec{f}$ (the \emph{linear part} of $f$) from $\eta : E \to M$ and $\eta' : E' \to M$ such that for any $x \in M$, the restriction ${\left.f\right|}_{\pi_x} : \pi_x  \to \pi'_x$ is an affine map with linear part ${\left.\vec{f}\right|}_{\eta_x}: \eta_x \to \eta'_x$.
\end{proposition}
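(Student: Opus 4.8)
The plan is to prove the two implications separately. The implication from right to left is immediate: if such a vector bundle morphism $\vec{f}$ exists, then by hypothesis every restriction $\left.f\right|_{\pi_x}$ is an affine map, so $f$ is fiberwise affine and hence, by the definition of the preceding subsection, an $M$-morphism of affine bundles. All the work therefore lies in the converse, where the difficulty will be to manufacture the linear part $\vec{f}$ as a \emph{global, smooth} morphism of vector bundles out of purely fiberwise data.

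For the forward implication I would start from the observation that, $f$ being fiberwise affine, each restriction $f_x := \left.f\right|_{\pi_x} : \pi_x \to \pi'_x$ carries a uniquely determined $\eta_x$-linear part $\vec{f}_x : \eta_x \to \eta'_x$, recoverable by the standard formula
\[
\vec{f}_x(v) = f_x(a+v) - f_x(a) \qquad (a \in \pi_x, \; v \in \eta_x),
\]
where the right-hand difference is the torsor difference in the fiber $\pi'_x$ and is independent of the chosen base point $a$. To glue these fiberwise maps into a single map $\vec{f} : E \to E'$, I would fix a global section $s : M \to Z$ of $\pi$, which exists by the remark above (following \cite{steenrod}), and set
\[
\vec{f}(e) = f\bigl(s(\eta(e)) + e\bigr) - f\bigl(s(\eta(e))\bigr) \in \eta'_{\eta(e)},
\]
the difference again being taken in the appropriate fiber of $Z'$. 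By the displayed formula this $\vec{f}$ restricts on each fiber to $\vec{f}_x$; in particular it is fiber-preserving over ${\rm id}_M$ and fiberwise linear.

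The hard part, and the only genuine obstacle, will be to check that $\vec{f}$ is smooth, and the way to do it is to exhibit $\vec{f}$ as a composite of smooth maps. First, the assignment $e \mapsto \bigl(s(\eta(e)) + e,\, s(\eta(e))\bigr)$ is smooth, because $\eta$ and $s$ are smooth and the torsor action ${\rm t}$ of $\eta$ on $\pi$ has already been shown to be smooth; next, applying $f$ in both slots is smooth since $f$ is; and finally the fiberwise torsor difference $Z' \times_M Z' \to E'$ is smooth, as one sees in fiberwise affine adapted coordinates, where it reads simply as the subtraction $(x^i; z'^\alpha, w'^\alpha) \mapsto (x^i; z'^\alpha - w'^\alpha)$. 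Composing these three smooth maps shows that $\vec{f}$ is smooth, hence a fiberwise linear $M$-morphism of fiber bundles, i.e.\ an $M$-morphism of vector bundles. Since $\vec{f}$ restricts to $\vec{f}_x$ on each fiber by construction, the linear part of $f_x$ is exactly $\left.\vec{f}\right|_{\eta_x}$, which is what the statement requires and completes the proof.
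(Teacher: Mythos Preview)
Your proof is correct and follows the same approach as the paper: define $\vec{f}$ fiberwise as the linear part of each $f_x$ and verify that the resulting map is an $M$-morphism of vector bundles. The paper dispatches the verification with ``we check easily'', whereas you have spelled out the smoothness argument by choosing a global section and expressing $\vec{f}$ as a composite of smooth maps; this is exactly the kind of detail the paper is suppressing.
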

\begin{proof}
We define $\vec{f}:E \to E'$ on each fiber $\eta_x$ as the linear part of ${\left.f\right|}_{\pi_x} : \pi_x \to \pi'_x$ and we check easily that it is indeed an $M$-morphism of vector bundles.
\end{proof}

Affine bundles over $M$ and $M$-morphisms between them form a category, denoted by ${\rm AB}(M)$. 

\begin{remark}
The category ${\rm AB}(M)$ and affine maps is \emph{not} a subcategory of the category of torsors under the action of vector bundles since morphisms are not necessarily equivariant maps. Here, equivariance is ``shifted'' by an $M$-morphism of vector bundles.
\end{remark}

\section{An equivalence of categories}

Both affine bundles  and affine spaces over projective finitely generated ${\rm C}^\infty(M)$-modules were shown above to be torsors uner the action of particular group objects. Also morphisms in both cases were required to preserve the same kind of ``linear property'' of these group objects. Finally, these group objects live in two {equivalent categories} \cite{husemoller, nestruev}. Let us show that the two affine concepts inherit this {equivalence of categories}.


\begin{lemma}
Let $M$ be a connected $m$-dimensional smooth manifold and $\pi : Z \to M$ be an affine bundle modelled on a vector bundle $\eta : E \to M$. The space of global sections $\Gamma(\pi)$ is in a canonical way an affine space over the ${\rm C}^\infty(M)$-module $\Gamma(\eta)$.
\end{lemma}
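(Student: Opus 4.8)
The plan is to endow $\Gamma(\pi)$ with a free and transitive action of the additive group underlying the ${\rm C}^\infty(M)$-module $\Gamma(\eta)$, obtained by fiberwise translation. First, recall that $\Gamma(\pi)$ is nonempty by the remark above (global sections exist by \cite{steenrod}), while $\Gamma(\eta)$ is the usual ${\rm C}^\infty(M)$-module of sections of a vector bundle. The candidate action is
\[
\Gamma(\eta) \times \Gamma(\pi) \to \Gamma(\pi) : (\sigma, s) \mapsto s + \sigma \; ,
\qquad (s + \sigma)(x) := s(x) + \sigma(x) \; ,
\]
where the sum on the right is computed in the affine fiber $\pi_x$ modelled on $\eta_x$. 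That $s + \sigma$ is again a smooth section follows immediately from the smoothness of the torsor action ${\rm t} : E \times_M Z \to Z$ established earlier: indeed $s + \sigma$ is the composite of the smooth map $x \mapsto (\sigma(x), s(x))$ into $E \times_M Z$ with ${\rm t}$.

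The group-action axioms are inherited fiberwise. Adding the zero section of $\eta$ leaves every $s$ unchanged, and $(s + \sigma) + \tau = s + (\sigma + \tau)$ because this identity holds in each fiber $\pi_x$, where it merely expresses the associativity of the affine action of $\eta_x$. Freeness is equally direct: if $s + \sigma = s + \tau$, then evaluating at an arbitrary $x \in M$ and invoking freeness of the $\eta_x$-action on $\pi_x$ yields $\sigma(x) = \tau(x)$, whence $\sigma = \tau$.

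The only genuinely nontrivial point is transitivity, namely that for any two global sections $s_0, s \in \Gamma(\pi)$ the pointwise difference $\sigma(x) := s(x) - s_0(x) \in \eta_x$ assembles into a \emph{smooth} section of $\eta$. This is a local question, which I would settle using the adapted coordinates in which ${\rm t}$ reads $(x^i; e^\beta; z^\alpha) \mapsto (x^i; z^\alpha + e^\alpha)$: in such coordinates the components of $\sigma$ are simply the differences of the (smooth) coordinate expressions of $s$ and $s_0$, hence smooth. Once $\sigma$ is known to be a genuine element of $\Gamma(\eta)$ satisfying $s = s_0 + \sigma$, transitivity follows and $\Gamma(\pi)$ is an affine space over $\Gamma(\eta)$. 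This verification of smoothness of the difference section is the step I expect to demand the most care; everything else is a formal transfer to sections of the affine structure already present in each fiber.
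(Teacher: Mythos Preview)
Your proposal is correct and follows essentially the same approach as the paper: define the action of $\Gamma(\eta)$ on $\Gamma(\pi)$ by fiberwise translation, $(s+\sigma)(x):=s(x)+\sigma(x)$, and observe that freeness and transitivity are inherited from the fiberwise affine structure. The paper's proof is in fact terser than yours; your additional care in checking that $s+\sigma$ and the difference section $s-s_0$ are smooth (via the smoothness of ${\rm t}$ and adapted local coordinates, respectively) fills in details the paper leaves implicit.
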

\begin{proof}
Fiberwise translations in all fibers define an action
\[
{\rm t} : \Gamma(\eta) \times \Gamma(\pi) \to \Gamma(\pi) : (u,s) \mapsto s + u 
\]
where the value of the section $s+u$ at a point $x \in M$ is given by
\[
(s+u)(x) := s(x) + u(x) \in \pi_x
\]
This action free and transitive since it is build fiberwise from affine spaces structures.
\end{proof}

\begin{lemma}\label{prop:affine bundle reached by sections}
Any point $z$ in the total space of an affine bundle $\pi : Z \to M$ can be reached by a global section.
\end{lemma}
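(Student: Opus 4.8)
The plan is to reduce the affine statement to its vector-bundle counterpart, Remark~\ref{prop:total space reached by global sections}, by exploiting the torsor structure on global sections established in the previous lemma. Fix a point $z \in Z$ and set $x_0 := \pi(z)$; the goal is to produce a global section $s \in \Gamma(\pi)$ with $s(x_0) = z$.

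First I would invoke the fact (noted in the earlier remark, via \cite{steenrod}) that the affine bundle $\pi$ admits at least one global section $s_0 \in \Gamma(\pi)$. Its value $s_0(x_0)$ lies in the affine fiber $\pi_{x_0}$, the same affine space as $z$, so the difference $e_0 := z - s_0(x_0)$ is a well-defined vector of the fiber $\eta_{x_0}$ of the modelling vector bundle.

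Next, I would appeal to Remark~\ref{prop:total space reached by global sections}: since $e_0$ is a point of the total space $E$ of the vector bundle $\eta$, there is a global section $u \in \Gamma(\eta)$ with $u(x_0) = e_0$. Finally, using that $\Gamma(\pi)$ is an affine space over $\Gamma(\eta)$ (the previous lemma), the section $s := s_0 + u$ is again a global section of $\pi$, and its value at $x_0$ is $s_0(x_0) + u(x_0) = s_0(x_0) + e_0 = z$, as desired.

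The only delicate point is the passage from the affine bundle to its modelling vector bundle: affine bundles carry no distinguished zero section, so one cannot directly globalize a local section by multiplying with a bump function as in the vector-bundle case. The affine structure of $\Gamma(\pi)$ over $\Gamma(\eta)$ is precisely what repairs this, letting me transport the problem to $\eta$, where the required reachability is already available.
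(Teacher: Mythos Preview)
Your argument is correct and is essentially identical to the paper's own proof: fix a global section $s_0$ of $\pi$, apply Remark~\ref{prop:total space reached by global sections} to the point $z - s_0(\pi(z))$ of the modelling vector bundle to get $u \in \Gamma(\eta)$, and take $s_0 + u$. Your additional remark on why the bump-function trick does not apply directly to affine bundles is a nice clarification but not needed for the proof.
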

\begin{proof}
Having fixed a global section $s_0 : M \to Z$, it suffices to find a global section $u$ of the vector bundle $\eta : E \to M$ passing through $z - s_0(\pi(z)) \in \eta_{\pi(z)}$ because then $s_0+u$ does the job. Since Remark \ref{prop:total space reached by global sections} ensures the existence of such a section $u$, the Lemma follows.
\end{proof}

\begin{theorem}\label{theorem: equiv of categories affine}
The functor of global sections $\Gamma$, from the category ${\rm AB}(M)$ of affine bundles over $M$ to the category ${\rm AS}({\rm Mod}_{\text{pfg}}\ {\rm C}^\infty(M))$ of affine spaces over projective finitely generated ${\rm C}^\infty(M)$-modules is an equivalence of categories.
\end{theorem}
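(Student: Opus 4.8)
The plan is to show that $\Gamma$ is a well-defined functor into ${\rm AS}({\rm Mod}_{\text{pfg}}\ {\rm C}^\infty(M))$ which is both fully faithful and essentially surjective, reducing every step to the already-established equivalence $\Gamma : {\rm VB}(M) \to {\rm Mod}_{\text{pfg}}\ {\rm C}^\infty(M)$ for the underlying vector bundles. First I would confirm functoriality on morphisms: if $f : Z \to Z'$ is an $M$-morphism of affine bundles with linear part $\vec{f} : E \to E'$, then setting $\Gamma(f) : s \mapsto f \circ s$ and evaluating at a point, a fiberwise computation gives $\Gamma(f)(s+u) = \Gamma(f)(s) + \Gamma(\vec{f})(u)$ for $s \in \Gamma(\pi)$ and $u \in \Gamma(\eta)$. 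Hence $\Gamma(f)$ is a ${\rm C}^\infty(M)$-affine map whose ${\rm C}^\infty(M)$-linear part is $\Gamma(\vec{f})$, and the action of $\Gamma$ on linear parts is governed by the vector-bundle functor.

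For essential surjectivity I would argue as follows. Let $A$ be an affine space over a projective finitely generated module $P$. Choosing any basepoint $a_0 \in A$ produces an equivariant bijection $A \cong P$, which is an isomorphism in ${\rm AS}({\rm Mod}_{\text{pfg}}\ {\rm C}^\infty(M))$ with identity linear part. By the vector-bundle equivalence there is a vector bundle $\eta : E \to M$ with $\Gamma(\eta) = P$, and viewing $\eta$ as an affine bundle modelled on itself, the preceding Lemma identifies $\Gamma(\eta)$ with the affine space $P$ over $P$. Thus $A \cong \Gamma(\eta)$, and $\Gamma$ is essentially surjective.

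Faithfulness is immediate from Lemma~\ref{prop:affine bundle reached by sections}: if $\Gamma(f) = \Gamma(g)$, then for any $z \in Z$ pick a global section $s$ with $s(\pi(z)) = z$, whence $f(z) = (f \circ s)(\pi(z)) = \Gamma(f)(s)(\pi(z)) = \Gamma(g)(s)(\pi(z)) = g(z)$, so $f = g$. Fullness is the crux of the argument. Given an ${\rm C}^\infty(M)$-affine map $\phi : \Gamma(\pi) \to \Gamma(\pi')$ with linear part $\vec{\phi} : \Gamma(\eta) \to \Gamma(\eta')$, I would first invoke fullness of the vector-bundle functor to obtain a unique $M$-morphism of vector bundles $\vec{f} : E \to E'$ with $\Gamma(\vec{f}) = \vec{\phi}$. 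Then, fixing a reference global section $s_0 \in \Gamma(\pi)$ (which exists since affine bundles admit global sections), I define $f$ fiberwise by $f(z) := \phi(s_0)(x) + \vec{f}\bigl(z - s_0(x)\bigr)$ for $z \in \pi_x$, which is manifestly fiberwise affine with linear part $\vec{f}|_{\eta_x}$.

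The main obstacle is the smoothness of this reconstructed $f$, which I would settle by observing that each ingredient is smooth: the section $\phi(s_0) \in \Gamma(\pi')$, the smooth torsor subtraction $z \mapsto z - s_0(x)$, the vector-bundle morphism $\vec{f}$, and fiberwise translation. It then remains to check $\Gamma(f) = \phi$: for $s \in \Gamma(\pi)$ one computes $(f \circ s)(x) = \phi(s_0)(x) + \vec{f}\bigl((s - s_0)(x)\bigr) = \phi(s_0)(x) + \vec{\phi}(s - s_0)(x) = \phi\bigl(s_0 + (s - s_0)\bigr)(x) = \phi(s)(x)$, where the middle equalities use $\Gamma(\vec{f}) = \vec{\phi}$ and that $\phi$ is affine with linear part $\vec{\phi}$. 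This exhibits $\phi$ as $\Gamma(f)$, establishing fullness; together with the previous steps, $\Gamma$ is an equivalence of categories.
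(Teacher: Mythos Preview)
Your proof is correct and follows essentially the same approach as the paper: you verify essential surjectivity by trivializing the affine space via a basepoint and invoking the Swan--Serre equivalence for the underlying module, faithfulness via Lemma~\ref{prop:affine bundle reached by sections}, and fullness by lifting the linear part through the vector-bundle equivalence and then defining $f(z) := \phi(s_0)(\pi(z)) + \vec{f}\bigl(z - s_0(\pi(z))\bigr)$. In fact you supply a few details the paper leaves implicit (functoriality on morphisms, smoothness of the reconstructed $f$, and the explicit check that $\Gamma(f) = \phi$), but the argument is the same.
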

\begin{proof}
Remember that a functor is an equivalence of categories if and only if it is essentially surjective
and fully faithful.

First, the functor $\Gamma$ is essentially surjective. Let $A$ be a ${\rm C}^\infty(M)$-affine space modelled on a projective finitely generated module $P$. From the equivalence of categories between vector bundles and projective finitely generated ${\rm C}^\infty(M)$-modules, there is a vector bundle $\eta : E \to M$ and a ${\rm C}^\infty(M)$-isomorphism $T : P
  \stackrel{\sim}{\longrightarrow} \Gamma(\eta)$. The ${\rm C}^\infty(M)$-module $\Gamma(\eta)$ is also a ${\rm C}^\infty(M)$-affine space modelled on itself. Picking any point $a_0 \in A$,
 we define an isomorphism of ${\rm C}^\infty(M)$-affine spaces $\tilde{T} : A \stackrel{\sim}{\longrightarrow} \Gamma(\eta)$ by setting $\tilde{T}(a_0 + p) := T(p)$.
  
The functor $\Gamma$ is also faithful. Let $\alpha$ and $\beta$ be $M$-morphisms of affine bundles from $\pi : Z \to M$ to $\pi' : Z' \to M$, such that $\Gamma(\alpha) = \Gamma(\beta)$. This equality means that $\alpha \circ s = \beta \circ s$ for every $s \in \Gamma(\pi)$. It follows that $\alpha = \beta$ since Lemma \ref{prop:affine bundle reached by sections} ensures that any point in the total space of an affine bundle can be reached by a global section.
  
Finally, the functor $\Gamma$ is full. Let $\pi : Z \to M$ and $\pi' : Z' \to M$ be affine bundles modelled on $\eta : E \to M$ and $\eta' : E' \to M$, respectively. Let $T : \Gamma(\pi) \to \Gamma(\pi')$ be a ${\rm C}^\infty(M)$-affine map. The linear part $\vec{T} : \Gamma(\eta) \to \Gamma(\eta')$ is a ${\rm C}^\infty(M)$-homomorphism. From the equivalence of categories between vector bundles and projective finitely generated ${\rm C}^\infty(M)$-modules, there exists an $M$-morphism of vector bundles $f$ from $\eta : E \to M$ to $\eta' : E' \to M$ such that $\vec{T} = \Gamma(f)$. Picking any global section $s_0 \in \Gamma(\pi)$, we define an $M$-morphism of affine bundles $\alpha : Z \to Z'$ by setting $\alpha(z) := T(s_0)(\pi(z)) + f(z - s_0(\pi(z)))$. Obviously, $\Gamma(\alpha) = T$. 
\end{proof}

\section{An algebraic characterization for vector bundles}

\subsection{Fiberwise polynomial functions on vector bundles}

Let $\pi : E_\pi \to M$ be a vector bundle over $M$. The pullback map $\pi^*$ determines an isomorphism of ${\mathbb R}$-algebras ${{\rm C}^\infty(M)} \to {\rm Pol}^0(\pi)$. 
This way, multiplication by elements of ${\rm Pol}^0(\pi)$ endows every ${\rm Pol}^k(\pi)$ $(k \in {\mathbb N})$ with a natural ${{\rm C}^\infty(M)}$-module structure: for all $g \in {{\rm C}^\infty(M)}, f \in {\rm Pol}^k(\pi)$, 
\begin{equation}\label{eq: module structure on A1} 
g f := \pi^{*}(g)f \in {\rm Pol}^k(\pi) \; .
\end{equation}
In particular when $k = 1$, this ${{\rm C}^\infty(M)}$-module is isomorphic to the dual module $\Gamma(\pi)^\vee$ of the module of (global) sections of $\pi$. For a function $f \in {\rm Pol}^1(\pi)$, the corresponding element $\xi_f \in
\Gamma(\pi)^\vee$ is the onesuch that for any $s \in \Gamma(\pi)$ and any $x \in M$, we have 
\begin{equation}\label{eq: iso A1 - dual}
\xi_f(s)(x) = f(s(x)) \; .
\end{equation}

\begin{proposition}[\cite{llz}]\label{prop:iso are filtered}
Let $\pi : E_\pi \to M$ and $\eta : E_\eta \to N$ be two vector bundles. 
Every isomorphism of ${\mathbb R}$-algebras $\Psi: {{\rm Pol}}(E_\pi) \to
{{\rm Pol}}(E_\eta)$ is filtered with respect to the filtrations of ${{\rm Pol}}(E_\pi)$ and
${{\rm Pol}}(E_\eta)$ associated with their gradings.
\end{proposition}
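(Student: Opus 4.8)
The plan is to reconstruct the total spaces together with their bundle structure from the bare ${\mathbb R}$-algebras, and then to read off the filtration geometrically. First I would identify the total space $E_\pi$ with the set of ${\mathbb R}$-algebra characters $\chi : {\rm Pol}(E_\pi) \to {\mathbb R}$. Each such $\chi$ restricts to a character of the subalgebra ${\rm Pol}^0(\pi) \cong {\rm C}^\infty(M)$, hence (a character of ${\rm C}^\infty(M)$ being evaluation at a point, see \cite{nestruev}) to evaluation at some $x \in M$. Since $\chi(\pi^{*}g\, f) = g(x)\,\chi(f)$ for $g \in {\rm C}^\infty(M)$, the restriction of $\chi$ to ${\rm Pol}^1(\pi) \cong \Gamma(\pi)^\vee$ kills $\mathfrak{m}_x\,{\rm Pol}^1(\pi)$, where $\mathfrak{m}_x$ is the ideal of functions vanishing at $x$, and thus factors through $\Gamma(\pi)^\vee / \mathfrak{m}_x \Gamma(\pi)^\vee$, determining a point $e$ of the fibre $(E_\pi)_x$. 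As ${\rm Pol}(E_\pi)$ is generated as an ${\mathbb R}$-algebra by ${\rm Pol}^0(\pi)$ and ${\rm Pol}^1(\pi)$, this forces $\chi = {\rm ev}_e$ for a unique $e$, giving a canonical bijection between characters and points of $E_\pi$. Consequently $\Psi$ induces a bijection $\phi : E_\eta \to E_\pi$ characterised by $(\Psi f)(e') = f(\phi(e'))$ for all $f$ and all $e' \in E_\eta$, with $\phi$ and $\phi^{-1}$ smooth. The whole statement then reduces to controlling the fibrewise polynomial degree of $\phi$.

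Next I would recover the base and the projection. The key point is an intrinsic, isomorphism-invariant description of the subalgebra ${\rm Pol}^0(\pi) = {\rm C}^\infty(M)$ inside ${\rm Pol}(E_\pi)$; granting such a description, $\Psi$ restricts to an isomorphism ${\rm C}^\infty(M) \to {\rm C}^\infty(N)$, which by the same identification of characters is the comorphism of a diffeomorphism $\bar\phi : N \to M$. Compatibility of characters with the inclusions ${\rm Pol}^0 \hookrightarrow {\rm Pol}$ then forces $\phi$ to cover $\bar\phi$, that is, to map each fibre $(E_\eta)_y$ onto the fibre $(E_\pi)_{\bar\phi(y)}$. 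In particular $\phi$ restricts, over each point, to a diffeomorphism between fibres that pulls fibrewise polynomials back to fibrewise polynomials, together with its inverse.

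The remaining and main difficulty is to show that each of these fibre diffeomorphisms respects the polynomial degree, equivalently that $\Psi\bigl({\rm Pol}^{\le k}(\pi)\bigr) \subseteq {\rm Pol}^{\le k}(\eta)$ for every $k$, the reverse inclusion following by applying the argument to $\Psi^{-1}$. This is the step I expect to be delicate, because it is here that the \emph{linear} structure of the fibres, and not merely their algebra of polynomial functions, must be brought into play: one has to exploit the global, over-$M$ nature of $\Psi$ together with the rigidity coming from the underlying ${\rm C}^\infty(M)$-module structure, and, where the topology of $M$ is not assumed, the smooth-envelope machinery, in order to pin the fibre maps down as fibrewise affine. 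Once the fibre maps are known to have polynomial degree one, any $f \in {\rm Pol}^{\le k}(\pi)$ satisfies $\Psi(f) = f \circ \phi \in {\rm Pol}^{\le k}(\eta)$, since precomposition by a fibrewise affine map cannot raise the fibrewise degree. This gives $\Psi\bigl({\rm Pol}^{\le k}(\pi)\bigr) = {\rm Pol}^{\le k}(\eta)$, and hence that $\Psi$ is filtered.
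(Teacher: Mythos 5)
This proposition carries no proof in the paper at all --- it is imported wholesale from \cite{llz} --- so there is no internal argument to compare yours against; judged on its own terms, what you have written is an outline rather than a proof, and the one step that carries the entire content of the statement is precisely the one you leave open. Your first two stages are reasonable and fillable. Identifying $E_\pi$ with the characters of ${\rm Pol}(E_\pi)$ works as you describe. For the second stage you ``grant'' an intrinsic, isomorphism-invariant description of ${\rm Pol}^0(\pi)$ inside ${\rm Pol}(\pi)$; one is in fact available (an element of ${\rm Pol}(\pi)$ is invertible exactly when it is a nowhere-vanishing element of ${\rm C}^\infty(M)$, since on each fibre a polynomial with polynomial reciprocal is constant, and ${\rm C}^\infty(M)$ is generated by its units, e.g. $f=(1+f^2)\cdot\bigl(\tfrac{f}{1+f^2}+2\bigr)-2(1+f^2)$), but you must actually supply it: the inclusion $\Psi({\rm Pol}^0(\pi))\subseteq{\rm Pol}^0(\eta)$ is itself the $k=0$ case of the filtration property you are trying to prove, so it cannot be assumed.

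The genuine gap is the third stage. After your reductions, what remains is exactly the original problem in geometric dress: a fibre-preserving diffeomorphism $\phi$ such that both $\phi^*$ and $(\phi^{-1})^*$ preserve fibrewise polynomials, which you must show is fibrewise of degree one. You flag this as ``delicate'' but propose no mechanism, only the assertion that the ${\rm C}^\infty(M)$-module structure and the global nature of $\Psi$ should ``pin the fibre maps down.'' Observe that no fibrewise argument can possibly succeed: on a fibre of dimension at least $2$, a triangular substitution such as $(\xi_1,\xi_2)\mapsto(\xi_1,\xi_2+\xi_1^2)$ is a polynomial diffeomorphism with polynomial inverse, fixes the base functions, yet is not affine and does not preserve the degree filtration. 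Everything your outline establishes about $\phi$ (fibre-preserving, polynomial in both directions, covering a diffeomorphism of bases) is satisfied by the bundle map induced by this substitution on a trivial bundle of rank $2$, so whatever hypothesis or structural input from \cite{llz} actually excludes such maps is precisely what is missing from your argument --- and, for that matter, is not visible in the statement as transcribed either. Until that step is made explicit, the proposal proves only the comparatively easy facts that $\Psi$ is the pullback by a fibre-preserving diffeomorphism; the filtration claim itself remains unproved.
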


\subsection{Smooth envelopes and lifting of isomorphisms}

Let $A$ be an \textit{${\mathbb R}$-algebra}, i.e. an associative algebra over ${\mathbb R}$ which is commutative and has a unit element $1_A$. Let ${\rm Spec}_{\mathbb R} A$ denote the set of all ${\mathbb R}$-algebra homomorphisms $A \to {\mathbb R}$. 
The set ${\rm Spec}_{\mathbb R} A$ is by definition the ${\mathbb R}$-\textit{spectrum}, while the elements of ${\rm Spec}_{\mathbb R} A$ will sometimes be called \textit{${\mathbb R}$-points} of the algebra $A$.
+
Elements of $A$ can be viewed as functions on ${\rm Spec}_{\mathbb R} A$. More precisely, we can associate a function with each $a \in A$, namely the function
\begin{eqnarray*}
f_a : {\rm Spec}_{\mathbb R} A \to {\mathbb R} : h \mapsto f_a(h) := h(a)
\end{eqnarray*}
This procedure supply thus  any abstract algebra $A$ with a \emph{geometrization} as an algebra of functions on the space ${\rm Spec}_{\mathbb R} A$.

\begin{example}
In the case where $A$ is the algebra of smooth functions on a smooth manifold, the ${\mathbb R}$-spectrum identifies with the manifold itself (see \cite[paragraph 7.2]{nestruev}).
\end{example}

\noindent Consider the subalgebra $A_{\text{geom}} := \{ f_a : a \in A \}$ of the ${\mathbb R}$-algebra ${\mathcal F}(A)$ of all (real-valued) functions on ${\rm Spec}_{\mathbb R} A$. The  map $A \to A_{\text{geom}}, a \mapsto f_a$ is always a surjective ${\mathbb R}$-homomorphism. When it is injective, we say that the algebra $A$ is
\textit{geometric}. A geometric algebra is thus an algebra which is canonically isomorphic to an algebra of functions through the above homomorphism.  

\begin{lemma}
An ${\mathbb R}$-algebra $A$ is geometric if and only if the ideal $I_A := \bigcap_{h \in {\rm Spec}_{\mathbb R} A} \ker h$ is trivial. In particular, for any vector bundle $\pi : E_\pi \to M$, $A = {\rm C}^\infty(E_\pi)$ and $A={\rm Pol}(\pi)$ are geometric.
\end{lemma}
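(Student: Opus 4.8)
The plan is to prove the equivalence by identifying the kernel of the geometrization homomorphism $A \to A_{\text{geom}}$, $a \mapsto f_a$, with the ideal $I_A$. Since this map is already known to be surjective, injectivity is all that is at stake, and injectivity is equivalent to triviality of its kernel. First I would compute that kernel: an element $a \in A$ lies in it precisely when $f_a$ is the zero function on ${\rm Spec}_{\mathbb R} A$, that is, when $h(a) = f_a(h) = 0$ for every ${\mathbb R}$-point $h$. This last condition says exactly that $a \in \ker h$ for all $h$, i.e. $a \in \bigcap_{h} \ker h = I_A$. Hence $\ker(a \mapsto f_a) = I_A$, and $A$ is geometric (equivalently, the geometrization map is injective) if and only if $I_A = \{0\}$.

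For the two examples, the key observation is that both ${\rm C}^\infty(E_\pi)$ and ${\rm Pol}(\pi)$ are, by definition, algebras of real-valued functions on the total space $E_\pi$, and that every point $p \in E_\pi$ yields an evaluation homomorphism ${\rm ev}_p : f \mapsto f(p)$, which is a genuine element of the ${\mathbb R}$-spectrum. I would then bound $I_A$ from above: since the intersection defining $I_A$ ranges over all ${\mathbb R}$-points, it is contained in the intersection over the evaluation points alone,
\[
I_A \subseteq \bigcap_{p \in E_\pi} \ker({\rm ev}_p) = \{\, f : f(p) = 0 \text{ for all } p \in E_\pi \,\} \; .
\]
For $A = {\rm C}^\infty(E_\pi)$ the right-hand side is $\{0\}$, since a smooth function vanishing at every point of the manifold $E_\pi$ is the zero function (this is precisely the content of the Example above, where the spectrum of the function algebra of a manifold is identified with the manifold itself). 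For $A = {\rm Pol}(\pi)$ the same conclusion holds because ${\rm Pol}(\pi)$ is a subalgebra of ${\rm C}^\infty(E_\pi)$, so a fiberwise polynomial function vanishing everywhere is already zero as a function. In both cases $I_A = \{0\}$, and the equivalence just established shows that the algebra is geometric.

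I do not expect a genuine obstacle here, as the first part is a direct unwinding of the definition of the geometrization map. The only point that deserves a little care concerns the examples: one should resist the temptation to determine the full ${\mathbb R}$-spectrum, which may a priori contain homomorphisms other than point evaluations. This is harmless, however, because the intersection over the evaluation homomorphisms alone is already trivial, and this forces the larger intersection $I_A$ to be trivial as well.
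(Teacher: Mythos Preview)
Your proposal is correct and follows essentially the same approach as the paper: identify the kernel of the geometrization map $a \mapsto f_a$ with $I_A$, and for the examples observe that $I_A$ is contained in the intersection of the kernels of the point-evaluation homomorphisms on $E_\pi$, which is trivial. Your write-up is in fact slightly more careful than the paper's, which uses the same inclusion argument but writes the intersection over $x \in M$ rather than over points of $E_\pi$.
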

\begin{proof}
The first statement follows from the fact that we have
\begin{eqnarray*}
f_a = 0 & \Leftrightarrow & \forall h \in {\rm Spec}_{\mathbb R} A \mbox{, } f_a(h) = h(a) = 0 \\
		& \Leftrightarrow & a \in \bigcap_{h \in {\rm Spec}_{\mathbb R} A} \ker h \; .
\end{eqnarray*}
In the case $A = {\rm C}^\infty(E_\pi)$ or $A={\rm Pol}(\pi)$, the ideal $I_A$ is a subset of $\bigcap \limits_{x \in M} \ker {\rm ev}_x = {0}$ and $I_A$ is thus trivial because only the zero function vanishes at all points of $E_\pi$. 
\end{proof}

\begin{definition}[\cite{nestruev}]
A geometric ${\mathbb R}$-algebra $A$ (viewed as an algebra of functions) is \emph{${\rm C}^\infty$-closed} if for any for any finite collection of its elements $f_1,...,f_k$ and any function $g \in {\rm C}^\infty({\mathbb R}^k)$, there exists an element $f \in A$ such that 
\[
g(f_1(x),...,f_k(x)) = f(x) \quad \mbox{for all $x \in {\rm Spec}_{\mathbb R} A$} \; .
\] 
\end{definition}

\begin{example}
For any vector bundle $\pi : E_\pi \to M$, the algebra $A = {\rm C}^\infty(E_\pi)$ is ${\rm C}^\infty$-closed. Indeed, for any finite collection of elements $f_1,...,f_k \in {\rm C}^\infty(E_\pi)$ and any function $g \in {\rm C}^\infty({\mathbb R}^k)$, the composition of the smooth maps $(f_1,...,f_k): E_\pi \to \mathbb{R}^k$ and $g : {\mathbb R}^k \to {\mathbb R}$ belongs to ${\rm C}^\infty(E_\pi)$.
\end{example}

\begin{definition}[{\cite[paragraph 3.32]{nestruev}}]
Let $A$ be a geometric ${\mathbb R}$- algebra. A ${\rm C}^\infty$-closed ${\mathbb R}$-algebra $\bar{A}$ together with a homomorphism $i : A \to \bar{A}$ is a \emph{smooth envelope} of $A$ if for any homomorphism $\alpha : A \to B$ from $A$ into a ${\rm C}^\infty$-closed ${\mathbb R}$-algebra $B$, there exists a unique homomorphism $\bar{\alpha} : \bar{A} \to B$ extending $\alpha$. In other words, under the above assumptions, the diagram 
\[
\xymatrix{
A \ar[rr]^\alpha \ar[d]_i && B \\
\bar{A} \ar@{.>}[urr]_{\bar{\alpha}} && 
}
\]
can always be uniquely completed (by the dotted arrow $\bar{\alpha}$) to a commutative one.
\end{definition}

\begin{proposition}\label{proposition: smooth envelope Pol}
For any vector bundle $\pi : E_\pi \to M$, the algebra ${\rm C}^\infty(E_\pi)$ (together with the inclusion $i_\pi:{\rm Pol}(\pi) \to {\rm C}^\infty(E_\pi)$) is a smooth envelope of ${\rm Pol}(\pi)$.
\end{proposition}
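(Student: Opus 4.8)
The plan is to verify the universal property of the smooth envelope directly, after recording that ${\rm C}^\infty(E_\pi)$ is an admissible target: it is geometric and ${\rm C}^\infty$-closed by the Lemma and Example above. The argument rests on two geometric inputs that I would establish first. The first is the identification ${\rm Spec}_{\mathbb R}{\rm Pol}(\pi)\cong E_\pi$: restricting an ${\mathbb R}$-point $\chi:{\rm Pol}(\pi)\to{\mathbb R}$ to ${\rm Pol}^0(\pi)\cong{\rm C}^\infty(M)$ yields, since ${\rm Spec}_{\mathbb R}{\rm C}^\infty(M)=M$, a point $x_0\in M$; the restriction to ${\rm Pol}^1(\pi)\cong\Gamma(\pi)^\vee$ is then ${\rm C}^\infty(M)$-linear over ${\rm ev}_{x_0}$, so it factors through the dual fibre $\eta_{x_0}^\vee$ and determines a point $e_0\in\eta_{x_0}\subseteq E_\pi$ with $\chi={\rm ev}_{e_0}$. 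The second input is a closed embedding $F=(f_1,\dots,f_k):E_\pi\hookrightarrow{\mathbb R}^k$ (with $k=p+q$) whose components are fibrewise polynomial: I take $f_1,\dots,f_p$ to be the pullbacks $g_i\circ\pi$ of a proper Whitney embedding $g=(g_1,\dots,g_p):M\hookrightarrow{\mathbb R}^p$ (these lie in ${\rm Pol}^0$) and $f_{p+1},\dots,f_{p+q}$ to be a finite generating family $\xi_1,\dots,\xi_q$ of the module $\Gamma(\pi)^\vee={\rm Pol}^1(\pi)$ (these lie in ${\rm Pol}^1$). Injectivity and the immersion property are immediate because the $\xi_j$ evaluate to a spanning set of $\eta_x^\vee$ at each $x$, and properness follows by combining properness of $g$ on the base with the fibrewise-uniform lower bound coming from that same spanning property.

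Granting these, I would set up the substitution construction as follows. Because $F$ is a closed embedding, the restriction map $F^*:{\rm C}^\infty({\mathbb R}^k)\to{\rm C}^\infty(E_\pi)$ is surjective (smooth functions extend off a closed submanifold), so every $h\in{\rm C}^\infty(E_\pi)$ can be written $h=\psi(f_1,\dots,f_k)$. Given a homomorphism $\alpha:{\rm Pol}(\pi)\to B$ into a ${\rm C}^\infty$-closed algebra $B$, I define $\beta:{\rm C}^\infty({\mathbb R}^k)\to B$ by $\beta(\psi):=\psi(\alpha f_1,\dots,\alpha f_k)$, which makes sense and is a homomorphism because $B$ is ${\rm C}^\infty$-closed. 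The desired extension $\bar\alpha$ will be the factorisation of $\beta$ through $F^*$, so the point is to show $\ker F^*\subseteq\ker\beta$.

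Here is the crux, and it is where I would spend the effort. Let $\psi$ vanish on $W:=F(E_\pi)$; I must show $\beta(\psi)=0$ in $B$. Since $B$ is geometric it suffices to check that $\beta(\psi)$ vanishes at every ${\mathbb R}$-point $h\in{\rm Spec}_{\mathbb R}B$. As $h:B\to{\mathbb R}$ is a homomorphism into the ${\rm C}^\infty$-closed algebra ${\mathbb R}$, it commutes with smooth substitution (a homomorphism between ${\rm C}^\infty$-closed algebras automatically does, as one checks by evaluating both sides at ${\mathbb R}$-points), so $h(\beta(\psi))=\psi(h\alpha f_1,\dots,h\alpha f_k)$. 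Now $h\circ\alpha$ is an ${\mathbb R}$-point of ${\rm Pol}(\pi)$, hence by the first input equals ${\rm ev}_{e_0}$ for some $e_0\in E_\pi$; therefore the argument of $\psi$ is $(f_1(e_0),\dots,f_k(e_0))=F(e_0)\in W$, on which $\psi$ vanishes. Thus $h(\beta(\psi))=0$ for all $h$, giving $\beta(\psi)=0$ and hence $\bar\alpha$. The same spectrum computation shows that $\bar\alpha$ restricts to $\alpha$ on ${\rm Pol}(\pi)$: for $a=F^*\psi_a\in{\rm Pol}(\pi)$ one has $h(\bar\alpha(a))=\psi_a(F(e_0))=a(e_0)=h(\alpha(a))$ at every $h$. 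Finally, uniqueness is immediate: any extension ${\rm C}^\infty(E_\pi)\to B$ is a homomorphism of ${\rm C}^\infty$-closed algebras, hence commutes with substitution, so its value on $h=\psi(f_1,\dots,f_k)$ is forced to be $\psi(\alpha f_1,\dots,\alpha f_k)$.

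I expect the main obstacle to be the two preliminary inputs rather than the substitution bookkeeping: pinning down that the ${\mathbb R}$-spectrum of ${\rm Pol}(\pi)$ has no points beyond $E_\pi$ (which relies on the pointwise behaviour of the projective module $\Gamma(\pi)^\vee$ together with ${\rm Spec}_{\mathbb R}{\rm C}^\infty(M)=M$), and arranging a genuinely \emph{proper} polynomial embedding $F$ so that $F^*$ is onto. Once these are in hand, the reduction of the kernel inclusion to a pointwise statement via ${\rm Spec}_{\mathbb R}B$ makes the verification of the universal property essentially formal.
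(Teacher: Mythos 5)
Your argument is sound, but it is a genuinely different route from the paper's: the paper offers no proof at all here, deferring entirely to \cite[paragraph 11.58]{nestruev}, whereas you give a self-contained verification of the universal property. Your two geometric inputs are correct and are essentially the cited black box unpacked: the identification ${\rm Spec}_{\mathbb R}{\rm Pol}(\pi)\cong E_\pi$ (the degree-$0$ part locates a base point via ${\rm Spec}_{\mathbb R}{\rm C}^\infty(M)=M$, the degree-$1$ part locates a fibre point via projectivity of $\Gamma(\pi)^\vee$), and the proper fibrewise-polynomial closed embedding $F$ built from a closed Whitney embedding of $M$ together with a finite generating set of ${\rm Pol}^1(\pi)$, which makes $F^*$ surjective and reduces the construction of $\bar\alpha$ to substitution of smooth functions. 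The repeated reduction to evaluations at ${\mathbb R}$-points of $B$ (for $\ker F^*\subseteq\ker\beta$, for $\bar\alpha\circ i_\pi=\alpha$, and for uniqueness) is legitimate because the paper's definition of ${\rm C}^\infty$-closed presupposes that the algebra is geometric, so $B$ is geometric. What your route buys is an explicit description of the extension and independence from the reference; the price is that two standard facts are used silently and should be recorded: (i) ${\rm Pol}(\pi)$ is generated as an ${\mathbb R}$-algebra by ${\rm Pol}^0(\pi)\cup{\rm Pol}^1(\pi)$ (needed to conclude that an ${\mathbb R}$-point agreeing with ${\rm ev}_{e_0}$ in degrees $\leqslant 1$ equals ${\rm ev}_{e_0}$; it holds because $S^k\bigl(\Gamma(\pi)^\vee\bigr)\to{\rm Pol}^k(\pi)$ is onto for finitely generated projective modules), and (ii) ${\rm Spec}_{\mathbb R}{\rm C}^\infty(E_\pi)=E_\pi$, which your uniqueness step needs when it rewrites $h'\circ\gamma$ as an evaluation. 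Finally, the fibres you denote $\eta_{x_0}^\vee$ and $\eta_x^\vee$ should be fibres of the dual of $\pi$; no bundle $\eta$ occurs in this statement.
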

\begin{proof}
See \cite[paragraph 11.58]{nestruev}.
\end{proof}

\begin{proposition}\label{proposition: extending from Pol}
Let $\pi : E_\pi \to M$ and $\eta : E_\eta \to M$ be two vector bundles over $M$. For any ${\mathbb R}$-isomorphism $\Psi: {{\rm Pol}}(E_\pi) \to {{\rm Pol}}(E_\eta)$, there is a unique ${\mathbb R}$-homomorphism $\bar{\Psi} : {\rm C}^\infty(E_\pi) \to {\rm C}^\infty(E_\eta)$ extending $\Psi$. In other words, under the above assumptions, the diagram 
\[
\xymatrix{
{\rm Pol}(\pi) \ar[rr]^\Psi \ar[d]_{i_\pi} && {\rm Pol}(\eta) \ar[d]^{i_\eta} \\
{\rm C}^\infty(E_\pi) \ar@{.>}[rr]^{\bar{\Psi}} && {\rm C}^\infty(E_\eta) 
}
\]
can always be uniquely completed (by the dotted arrow $\bar{\Psi}$) to a commutative one. Moreover, $\bar{\Psi}$ is an ${\mathbb R}$-isomorphism.
\end{proposition}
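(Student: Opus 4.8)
The plan is to derive everything from the universal property of the smooth envelope. By Proposition \ref{proposition: smooth envelope Pol}, the pairs $({\rm C}^\infty(E_\pi), i_\pi)$ and $({\rm C}^\infty(E_\eta), i_\eta)$ are smooth envelopes of ${\rm Pol}(\pi)$ and ${\rm Pol}(\eta)$ respectively, and the example preceding that proposition guarantees that both ${\rm C}^\infty(E_\pi)$ and ${\rm C}^\infty(E_\eta)$ are ${\rm C}^\infty$-closed. These are exactly the ingredients the smooth-envelope formalism requires.

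First I would construct $\bar{\Psi}$. Consider the composite $i_\eta \circ \Psi : {\rm Pol}(\pi) \to {\rm C}^\infty(E_\eta)$, which is a homomorphism from ${\rm Pol}(\pi)$ into the ${\rm C}^\infty$-closed algebra ${\rm C}^\infty(E_\eta)$. Since $i_\pi : {\rm Pol}(\pi) \to {\rm C}^\infty(E_\pi)$ exhibits ${\rm C}^\infty(E_\pi)$ as a smooth envelope, the defining universal property yields a unique homomorphism $\bar{\Psi} : {\rm C}^\infty(E_\pi) \to {\rm C}^\infty(E_\eta)$ satisfying $\bar{\Psi} \circ i_\pi = i_\eta \circ \Psi$. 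This is precisely the existence and uniqueness of an arrow $\bar{\Psi}$ completing the square to a commutative diagram.

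It then remains to show that $\bar{\Psi}$ is an isomorphism, and I would do so by producing an explicit inverse through the same construction applied to $\Psi^{-1}$. Since $\Psi$ is an algebra isomorphism, $\Psi^{-1} : {\rm Pol}(\eta) \to {\rm Pol}(\pi)$ is a homomorphism, so applying the universal property to $i_\pi \circ \Psi^{-1} : {\rm Pol}(\eta) \to {\rm C}^\infty(E_\pi)$ gives a unique homomorphism $\overline{\Psi^{-1}} : {\rm C}^\infty(E_\eta) \to {\rm C}^\infty(E_\pi)$ with $\overline{\Psi^{-1}} \circ i_\eta = i_\pi \circ \Psi^{-1}$. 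The composite $\overline{\Psi^{-1}} \circ \bar{\Psi}$ then satisfies $(\overline{\Psi^{-1}} \circ \bar{\Psi}) \circ i_\pi = \overline{\Psi^{-1}} \circ i_\eta \circ \Psi = i_\pi \circ \Psi^{-1} \circ \Psi = i_\pi$, so it extends $i_\pi$ regarded as a homomorphism of ${\rm Pol}(\pi)$ into the ${\rm C}^\infty$-closed algebra ${\rm C}^\infty(E_\pi)$. Since ${\rm id}_{{\rm C}^\infty(E_\pi)}$ evidently also extends $i_\pi$, the uniqueness clause of the smooth-envelope property forces $\overline{\Psi^{-1}} \circ \bar{\Psi} = {\rm id}_{{\rm C}^\infty(E_\pi)}$. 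The symmetric argument yields $\bar{\Psi} \circ \overline{\Psi^{-1}} = {\rm id}_{{\rm C}^\infty(E_\eta)}$, so $\bar{\Psi}$ is an isomorphism with inverse $\overline{\Psi^{-1}}$.

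The whole argument is a standard instance of the functoriality of a universal construction, so I do not expect any real obstacle. The one step that deserves care is the last: one must invoke the \emph{uniqueness} part of the smooth-envelope property (not merely its existence part) to conclude that each round-trip composite coincides with the relevant identity. Once the smooth envelopes of Proposition \ref{proposition: smooth envelope Pol} are in hand, this adjunction-style cancellation goes through automatically; in particular, Proposition \ref{prop:iso are filtered} plays no role in this proof.
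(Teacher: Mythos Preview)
Your argument is correct and mirrors the paper's proof almost verbatim: both obtain $\bar{\Psi}$ by applying the smooth-envelope universal property to $i_\eta\circ\Psi$, then build the candidate inverse from $i_\pi\circ\Psi^{-1}$ and use the uniqueness clause to identify each round-trip with the identity. The only cosmetic difference is notation ($\overline{\Psi^{-1}}$ versus the paper's $\tilde{\Psi}$).
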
 
\begin{proof}
The existence and uniqueness of $\bar{\Psi}$ is a direct consequence of Proposition \ref{proposition: smooth envelope Pol}: use the universal property of the smooth envelope of ${\rm Pol}(\pi)$ with $\alpha = i_\eta \circ \Psi : {\rm Pol}(\pi) \to {\rm C}^\infty(E_\eta) $.  

To show that $\bar{\Psi}$ is an isomorphism, we shall find an inverse homomorphism. Let $\tilde{\Psi}$ be the homomorphism ${\rm C}^\infty(E_\eta) \to {\rm C}^\infty(E_\pi)$ obtained from the universal property of ${\rm Pol}(\eta)$ with $\alpha = i_\pi \circ \Psi^{-1} : {\rm Pol}(\eta) \to {\rm C}^\infty(E_\pi)$. To show that  $\tilde{\Psi} \circ \bar{\Psi} = {\rm id}_{{\rm C}^\infty(E_\pi)}$, it suffices to note that there is also a unique extension of the identity map ${\rm id}_{{\rm Pol}(\pi)} : {\rm Pol}(\pi) \to {\rm Pol}(\pi)$ to an ${\mathbb R}$-homomorphism $\bar{\rm id} : {\rm C}^\infty(E_\pi) \to {\rm C}^\infty(E_\pi)$: since ${\rm id}_{{\rm C}^\infty(E_\pi)}$ and $\tilde{\Psi} \circ \bar{\Psi}$ are two such extensions, they must coïncide. Starting from ${\rm id}_{{\rm Pol}(\pi)}$, we show the same way that $\bar{\Psi} \circ \tilde{\Psi} = {\rm id}_{{\rm C}^\infty(E_\eta)}$, hence $\tilde{\Psi} = \bar{\Psi}^{-1}$.
\end{proof}

In view of Milnor's classical theorem, any ${\mathbb R}$-isomorphism ${\rm C}^\infty(E_\pi) \to {\rm C}^\infty(E_\eta)$ is the pullback map of a diffeomorphism $E_\eta \to E_\pi$, hence the following immediate corollary.

\begin{corollary}
If two vector bundles have their algebras of fiberwise polynomial functions isomorphic (as $\mathbb{R}$-algebras), then their total spaces are diffeomorphic.
\end{corollary}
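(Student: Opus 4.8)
The plan is to chain together the lifting result of Proposition~\ref{proposition: extending from Pol} with the classical theorem of Milnor recalled just above, so that the argument is essentially a two-step composition requiring no new ideas.

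First I would start from the hypothesis: an $\mathbb{R}$-algebra isomorphism $\Psi : {\rm Pol}(E_\pi) \to {\rm Pol}(E_\eta)$ between the two algebras of fiberwise polynomial functions. By Proposition~\ref{proposition: extending from Pol}, such a $\Psi$ extends uniquely to an $\mathbb{R}$-isomorphism $\bar{\Psi} : {\rm C}^\infty(E_\pi) \to {\rm C}^\infty(E_\eta)$ of the full algebras of smooth functions on the total spaces. The only point deserving a word of attention here is the scope of that proposition: its proof rests solely on Proposition~\ref{proposition: smooth envelope Pol}, namely that ${\rm C}^\infty(E_\pi)$ is a smooth envelope of ${\rm Pol}(\pi)$ (and likewise for $\eta$), together with the universal property of smooth envelopes. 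Neither ingredient makes any reference to a common base, so the lifting argument goes through verbatim even when the two bundles sit over different base manifolds, which is the generality in which the corollary is stated.

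Second, I would invoke Milnor's theorem in the form recalled in the remark preceding the corollary: every $\mathbb{R}$-algebra isomorphism ${\rm C}^\infty(E_\pi) \to {\rm C}^\infty(E_\eta)$ is the pullback along a diffeomorphism $E_\eta \to E_\pi$. Applying this to $\bar{\Psi}$ yields precisely a diffeomorphism between the total spaces, which is the assertion of the corollary. I do not anticipate a genuine obstacle, in keeping with the fact that this is presented as an immediate corollary; the only hypothesis to verify is that Milnor's theorem is applicable to $E_\pi$ and $E_\eta$, and this holds because the total space of a vector bundle over a smooth manifold is itself a smooth manifold. Thus both steps apply and the conclusion follows at once.
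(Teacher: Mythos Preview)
Your proposal is correct and follows exactly the paper's approach: lift the $\mathbb{R}$-isomorphism ${\rm Pol}(\pi)\to{\rm Pol}(\eta)$ to ${\rm C}^\infty(E_\pi)\to{\rm C}^\infty(E_\eta)$ via Proposition~\ref{proposition: extending from Pol}, then invoke Milnor's theorem. Your remark that the smooth-envelope argument does not depend on the two bundles sharing a base is a valid (and useful) observation, since Proposition~\ref{proposition: extending from Pol} is stated over a common $M$ while the corollary is not.
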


\subsection{An algebraic characterization}

 
\begin{theorem}\label{theorem: algebraic characterization}
Let $\pi : E_\pi \to M$ and $\eta : E_\eta \to M$ be two vector bundles over $M$. A map $\Psi: {{\rm Pol}}(\pi) \to {{\rm Pol}}(\eta)$ satisfying  
\begin{equation}\label{eq: property restriction to A0}
\Psi \circ \pi^* = \eta^*
\end{equation}
is an isomorphism of ${\mathbb R}$-algebras if and only if there exists an isomorphism of \emph{affine} bundles $\alpha: \pi \to \eta$ over ${\rm id}_M$ such that 
\begin{equation}\label{eq: relation Psi - alpha}
\Psi = {\left.\alpha^{-1*}\right|}_{{{\rm Pol}}(\pi)}
\end{equation}
In this case, $\Psi$ is filtered and the induced graded isomorphism
$\Psi^{{\rm gr}} : {\rm Pol}(\pi) \to {\rm Pol}(\eta)$ is related to $\alpha$ via the
formula
\begin{equation}\label{eq: relation Psi - alpha GR}
\Psi^{{\rm gr}} = {\left.\vec{\alpha}^{-1*}\right|}_{{{\rm Pol}}(\pi)}
\end{equation}
where the linear part $\vec{\alpha} : \pi \to \eta$ of $\alpha$ is an
isomorphism of \emph{vector} bundles over $M$.
\end{theorem}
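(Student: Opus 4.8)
The plan is to prove the two implications separately, treating the backward direction as essentially a computation and concentrating the real work on the forward direction, where the affine bundle $\alpha$ must be manufactured out of the purely algebraic datum $\Psi$.

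For the backward direction, suppose an affine bundle isomorphism $\alpha : \pi \to \eta$ over ${\rm id}_M$ is given. Since $\alpha$ is a fiberwise affine diffeomorphism, composing a fiberwise polynomial with $\alpha^{-1}$ amounts to an affine substitution into a polynomial, which does not raise the fiber-degree; hence $\alpha^{-1*}$ restricts to a filtered map ${\rm Pol}(\pi) \to {\rm Pol}(\eta)$, and $\alpha^{*}$ restricts the same way, so $\Psi := {\left.\alpha^{-1*}\right|}_{{\rm Pol}(\pi)}$ is an $\mathbb{R}$-algebra isomorphism. The base condition (\ref{eq: property restriction to A0}) follows at once from $\eta \circ \alpha = \pi$: for $g \in {\rm C}^\infty(M)$ one has $\Psi(\pi^* g) = g \circ \pi \circ \alpha^{-1} = g \circ \eta = \eta^* g$. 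Reading $\alpha^{-1}$ fiberwise in local adapted coordinates as $z \mapsto \vec{\alpha}^{-1}(z) + b(x)$, the top fiber-degree part of $\Psi(F)$ for homogeneous $F$ is exactly $F \circ \vec{\alpha}^{-1}$, which gives simultaneously that $\vec{\alpha}$ is a vector bundle isomorphism and that $\Psi^{\rm gr} = {\left.\vec{\alpha}^{-1*}\right|}_{{\rm Pol}(\pi)}$, i.e. (\ref{eq: relation Psi - alpha GR}).

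For the forward direction, suppose $\Psi$ is an $\mathbb{R}$-algebra isomorphism satisfying (\ref{eq: property restriction to A0}). First I would extend $\Psi$ to an $\mathbb{R}$-isomorphism $\bar{\Psi} : {\rm C}^\infty(E_\pi) \to {\rm C}^\infty(E_\eta)$ using Proposition \ref{proposition: extending from Pol}, and then invoke Milnor's theorem, as recalled above, to write $\bar{\Psi}$ as the pullback by a diffeomorphism; calling its inverse $\alpha : E_\pi \to E_\eta$, we obtain $\Psi = {\left.\alpha^{-1*}\right|}_{{\rm Pol}(\pi)}$, which is (\ref{eq: relation Psi - alpha}). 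Applying $\bar{\Psi}$ to $\pi^* g$ and using (\ref{eq: property restriction to A0}) gives $g \circ \pi \circ \alpha^{-1} = g \circ \eta$ for all $g \in {\rm C}^\infty(M)$, forcing $\eta \circ \alpha = \pi$, so that $\alpha$ is a fiber bundle diffeomorphism over ${\rm id}_M$ carrying each fiber $\pi_x$ to $\eta_x$.

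It remains to show that $\alpha$ is fiberwise affine, which is the heart of the argument. By Proposition \ref{prop:iso are filtered} the isomorphism $\Psi$ is filtered, so $\alpha^{-1*}$ sends ${\rm Pol}^{\leq 1}(\pi)$ into ${\rm Pol}^{\leq 1}(\eta)$. Restricting to the fiber over $x$ and using that every affine function on the vector space $\pi_x$ is the restriction of some element of ${\rm Pol}^{\leq 1}(\pi)$ (constants from ${\rm Pol}^0 = \pi^* {\rm C}^\infty(M)$ and linear functionals from ${\rm Pol}^1(\pi) \cong \Gamma(\pi)^\vee$, via local triviality and bump functions), I would deduce that $\phi \mapsto \phi \circ \alpha_x^{-1}$ maps affine functions on $\pi_x$ to affine functions on $\eta_x$. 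Testing this on coordinate functions shows that each component of $\alpha_x^{-1}$ is affine, hence $\alpha_x^{-1}$, and so $\alpha_x$, is an affine map; smoothness of $\alpha$ then makes the translation and linear parts depend smoothly on $x$, so $\alpha$ is an affine bundle isomorphism whose linear part $\vec{\alpha}(e) = \alpha(e) - \alpha(0_{\pi(e)})$ is a vector bundle isomorphism. The filteredness of $\Psi$ and the graded formula (\ref{eq: relation Psi - alpha GR}) then follow by applying the backward-direction computation to the $\alpha$ just constructed. The main obstacle I anticipate is exactly this passage from the algebraic statement that $\Psi$ preserves the degree-$\leq 1$ piece of the filtration to the geometric statement that $\alpha$ is fiberwise affine: it hinges on identifying ${\rm Pol}^{\leq 1}$ fiberwise with genuine affine functions and on the elementary but essential fact that a diffeomorphism of vector spaces whose pullback preserves affine functions must itself be affine.
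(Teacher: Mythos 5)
Your proof is correct, but it takes a genuinely different route from the paper's. The paper keeps the equivalence of categories (Theorem~\ref{theorem: equiv of categories affine}) at the centre of the argument: from the filtered isomorphism $\Psi$ it builds a ${\rm C}^\infty(M)$-affine map $T : \Gamma(\eta) \to \Gamma(\pi)$ by dualizing ${\left.\Psi\right|}_{{\rm Pol}^1(\pi)}$ (using reflexivity of projective finitely generated modules and the canonical affine map ${\rm i}_{\Gamma(\eta)} : \Gamma(\eta) \to {\rm Pol}^{\leqslant 1}(\eta)^{\vee}$), realizes $T$ as $\Gamma(\beta)$ for an affine bundle morphism $\beta : \eta \to \pi$ via that equivalence, verifies $\Psi = {\left.\beta^*\right|}_{{\rm Pol}(\pi)}$ by evaluating on global sections, and only at the very end invokes the smooth envelope and Milnor's theorem to conclude that $\beta$ is a diffeomorphism. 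You reverse the order: you first produce the underlying diffeomorphism $\alpha$ from $\bar{\Psi}$ via Proposition~\ref{proposition: extending from Pol} and Milnor's theorem, use condition~(\ref{eq: property restriction to A0}) to see that it covers ${\rm id}_M$, and then extract fiberwise affinity directly from Proposition~\ref{prop:iso are filtered}, using that every affine function on a fibre $\pi_x$ is the restriction of an element of ${\rm Pol}^{\leqslant 1}(\pi)$ (constants plus global sections of the dual bundle, which reach every point of every fibre) and that a bijection of vector spaces whose pullback preserves affine functions is itself affine. Both arguments rest on the same two external inputs (filteredness of $\Psi$ and the smooth-envelope/Milnor step) and both obtain~(\ref{eq: relation Psi - alpha GR}) by the same top-degree computation; what your route buys is a shorter, more geometric argument that avoids the dualization step, at the cost of bypassing Theorem~\ref{theorem: equiv of categories affine} entirely --- which matters only insofar as the paper presents this theorem precisely as an application of that equivalence.
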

\begin{proof}
From lemma \ref{prop:iso are filtered}, we know that $\Psi$ is necessarily
filtered. It follows that its restriction $\Psi^{\leqslant 1} : {{\rm Pol}}^{\leqslant 1}(\pi) \to {{\rm Pol}}^{\leqslant 1}(\eta)$ is a bijection, while hypothesis (\ref{eq: property restriction to A0}) ensures that this map is ${{\rm C}^\infty(M)}$-linear: for any $g \in {{\rm C}^\infty(M)}, f \in {{\rm Pol}}^{\leqslant 1}(\pi)$, we have
\[
\Psi^{\leqslant 1}(g.f) = \Psi(\pi^*(g))\Psi(f) = \eta^*(g) \Psi(f) = g.\Psi^{\leqslant 1}(f)
\]
Now, the restriction ${\left.\Psi\right|}_{{\rm Pol}^1(\pi)}$ induces a
${{\rm C}^\infty(M)}$-homomorphism
\[
\Gamma(\pi)^{\vee} \cong {\rm Pol}^1(\pi) \stackrel{\Psi}{\hookrightarrow} {{\rm Pol}}^{\leqslant 1}(\eta) 
\]
whose dual map yields a ${{\rm C}^\infty(M)}$-homorphism ${{\rm Pol}}^{\leqslant 1}(\eta)^{\vee}  \to \Gamma(\pi)$ since the module $\Gamma(\pi)$ coincide with its bidual.(\footnote{Indeed, the dual of a module of sections can be seen as the module of sections of the dual bundle, where the duality is that of finite-dimensional vector spaces.}) Composing with the ${\rm C}^\infty(M)$-affine map(\footnote{This affine map appears in the study of \emph{vector hulls} of affine spaces and affine bundles \cite{MR2415174,ggu}.})
\[
{\rm i}_{\Gamma(\eta)} : \Gamma(\eta) \to {\rm Pol}^{\leqslant 1}(\eta)^{\vee} : t \mapsto (t^* : f \mapsto f \circ t) \; ,
\]
one gets a ${\rm C}^\infty(M)$-affine map $T : \Gamma(\eta) \to \Gamma(\pi)$.

In view of Theorem \ref{theorem: equiv of categories affine}, there is a unique morphism of affine bundles $\beta : \eta \to \pi$ over ${\rm id}_M$ for which ${{\rm C}^\infty(M)}$-affine morphism $T : \Gamma(\eta) \to \Gamma(\pi)$ reads $T = \Gamma(\beta)$, i.e. $T(t) = \beta \circ t$ for all $t \in \Gamma(\eta)$. Let us show that we have $\Psi = \left.\beta^*\right|_{{\rm Pol}(\pi)}$ and $\Psi^{\rm gr} = \left.\vec{\beta}^*\right|_{{\rm Pol}(\pi)}$.

Since every point of the total space $E_\eta$ can be reached by a global section of $\eta$, it is enough to show $f \circ \beta(t(x)) = \Psi(f)(t(x))$ and $f \circ \vec\beta(t(x)) =  \Psi^{{\rm gr}}(f)(t(x))$ for all $f \in {\rm Pol}^1(\pi), t \in \Gamma(\eta), x \in M$. On the one hand, we compute successively
\begin{eqnarray*}
f \circ \beta(t(x))
& = & \xi_f(\beta \circ t)(x) \\
& = & \xi_f(T(t))(x) \\
& = &  {\rm i}_{\Gamma(\eta)}(t) (F(\xi_f))(x) \\
& = &  F(\xi_f) \circ t (x) \\
& = & \Psi(f)(t(x)) \; .
\end{eqnarray*}
On the other hand, we have also 
\begin{eqnarray*}
f \circ \vec\beta(t(x))
& = & \xi_f(\vec{\beta} \circ t)(x) \\
& = & \xi_f(\vec{T}(t))(x) \\
& = &  \overrightarrow{{\rm i}_{\Gamma(\eta)}(t)} (F(\xi_f))(x) \\
& = & {\rm pr}^1 (F(\xi_f)) \circ t (x) \\
& = & \Psi^{{\rm gr}}(f)(t(x)) \; .
\end{eqnarray*}

Finally, since ${\rm C}^\infty(E_\pi)$ (resp. ${\rm C}^\infty(E_\eta)$) is the smooth envelope of ${\rm Pol}(\pi)$ (resp. ${\rm Pol}(\eta)$), Proposition \ref{proposition: extending from Pol} ensures that $\Psi : {\rm Pol}(\pi) \to {\rm Pol}(\eta)$ extends in a unique way to an ${\mathbb R}$-homorphism ${\rm C}^\infty(E_\pi) \to {\rm C}^\infty(E_\eta)$, this unique extension being an isomorphism because $\Psi$ is. Since $\beta^* : f \mapsto f \circ \beta$ is such an extension, it is an isomorphism and $\beta$ is thus a diffeomorphism. Setting $\alpha := \beta^{-1}$, we get the announced isomorphism of affine bundles.
\end{proof}

\begin{corollary}\label{corollary: iso from Pol}
Two vector bundles $\pi : E_\pi \to M$ and $\eta : E_\eta \to N$ are isomorphic if and only if their algebras of fiberwise polynomial functions are isomorphic (as $\mathbb{R}$-algebras).
\end{corollary}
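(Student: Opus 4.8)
The plan is to split the equivalence into its two implications and to reduce the nontrivial one to Theorem \ref{theorem: algebraic characterization}. The forward implication is immediate: an isomorphism of vector bundles $\pi \to \eta$ (covering a diffeomorphism of the base manifolds) is in particular a fiberwise linear diffeomorphism of total spaces, so its pullback sends fiberwise polynomial functions to fiberwise polynomial functions and restricts to an $\mathbb{R}$-algebra isomorphism ${\rm Pol}(\eta) \to {\rm Pol}(\pi)$. All the work lies in the converse.

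So suppose $\Psi : {\rm Pol}(\pi) \to {\rm Pol}(\eta)$ is an $\mathbb{R}$-algebra isomorphism. The difficulty, and the reason this does not follow at once from Theorem \ref{theorem: algebraic characterization}, is twofold: the base manifolds $M$ and $N$ need not coincide, and $\Psi$ carries no a~priori compatibility of the form (\ref{eq: property restriction to A0}). I~would first recover a diffeomorphism of the bases from the \emph{degree-zero} data of $\Psi$. By Proposition \ref{prop:iso are filtered} both $\Psi$ and $\Psi^{-1}$ are filtered, so $\Psi$ is an isomorphism of \emph{filtered} algebras and in particular restricts to an $\mathbb{R}$-algebra isomorphism ${\rm Pol}^0(\pi) \to {\rm Pol}^0(\eta)$ of the bottom pieces. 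Via the identifications $\pi^* : {\rm C}^\infty(M) \stackrel{\sim}{\longrightarrow} {\rm Pol}^0(\pi)$ and $\eta^* : {\rm C}^\infty(N) \stackrel{\sim}{\longrightarrow} {\rm Pol}^0(N)$, this yields an $\mathbb{R}$-algebra isomorphism $\psi_0 : {\rm C}^\infty(M) \to {\rm C}^\infty(N)$ characterized by $\Psi \circ \pi^* = \eta^* \circ \psi_0$, and Milnor's theorem provides a diffeomorphism $\phi : N \to M$ with $\psi_0 = \phi^*$.

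Next I would use $\phi$ to put both bundles over the \emph{same} base. Pulling $\pi$ back along $\phi$ gives a vector bundle $\phi^*\pi$ over $N$, together with the canonical bundle map $\tilde{\phi} : \phi^* E_\pi \to E_\pi$ covering $\phi$, which is an isomorphism of vector bundles since $\phi$ is a diffeomorphism. Its pullback $\tilde{\phi}^*$ is then an $\mathbb{R}$-algebra isomorphism ${\rm Pol}(\pi) \to {\rm Pol}(\phi^*\pi)$ satisfying $\tilde{\phi}^* \circ \pi^* = (\phi^*\pi)^* \circ \phi^* = (\phi^*\pi)^* \circ \psi_0$ on degree-zero elements. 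Setting $\Psi' := \Psi \circ (\tilde{\phi}^*)^{-1} : {\rm Pol}(\phi^*\pi) \to {\rm Pol}(\eta)$, a short computation combining this relation with $\Psi \circ \pi^* = \eta^* \circ \psi_0$ gives $\Psi' \circ (\phi^*\pi)^* = \eta^*$, i.e. $\Psi'$ satisfies exactly the compatibility condition (\ref{eq: property restriction to A0}) for the pair of bundles $\phi^*\pi$ and $\eta$, now both over $N$.

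Finally I would invoke Theorem \ref{theorem: algebraic characterization}: since $\Psi'$ is an $\mathbb{R}$-algebra isomorphism satisfying (\ref{eq: property restriction to A0}), it comes from an isomorphism of affine bundles $\alpha : \phi^*\pi \to \eta$ over ${\rm id}_N$ whose linear part $\vec{\alpha}$ is an isomorphism of vector bundles over $N$. Composing $\vec{\alpha}$ with $\tilde{\phi}^{-1}$ yields a fiberwise linear diffeomorphism $E_\pi \to E_\eta$ covering $\phi^{-1}$, that is, an isomorphism of vector bundles $\pi \cong \eta$. The main obstacle is therefore the bookkeeping of the base change: once the bottom of the filtration of $\Psi$ is recognized, through Proposition \ref{prop:iso are filtered} and Milnor's theorem, as a reparametrization of the base, the pullback trick absorbs it and reduces everything to the already-proved common-base statement of Theorem \ref{theorem: algebraic characterization}. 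The only routine verifications are the degree-zero identities relating $\tilde{\phi}^*$, $\pi^*$, $(\phi^*\pi)^*$ and $\psi_0$.
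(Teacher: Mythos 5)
Your proof is correct and follows essentially the same route as the paper: obtain a diffeomorphism of the bases from the degree-zero part of $\Psi$, pull one bundle back along it so that both live over the same base with condition (\ref{eq: property restriction to A0}) satisfied, and then apply Theorem \ref{theorem: algebraic characterization}. The only difference is cosmetic: the paper cites \cite[Lemma 1]{llz} for the base diffeomorphism, whereas you rederive it from Proposition \ref{prop:iso are filtered} together with Milnor's theorem.
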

\begin{proof}
Let $\Psi: {{\rm Pol}}(\pi) \to {{\rm Pol}}(\eta)$ be an isomorphism of ${\mathbb R}$-algebras. If $M=N$ and if (\ref{eq: property restriction to A0}) holds, the result is an immediate consequence of Theorem \ref{theorem: algebraic characterization}.

In the general case, it follows from \cite[Lemma1]{llz} that $\Psi$ induces a diffeomorphism $g : M \to N$. The pullback bundle $g^*\eta : E_{g^*\eta} \to M$ comes with an isomorphism of vector bundles $\tilde{g} : g^*\eta \to \eta$ while the map $\tilde{g}^* \circ \Psi : {\rm Pol}(\pi) \to {\rm Pol}(g^*\eta)$ is an ${\mathbb R}$-isomorphism satisfying (\ref{eq: property restriction to A0}). We can thus invoke Theorem \ref{theorem: algebraic characterization} to show that the bundles $\pi$ and $g^*\eta$ are isomorphic too, hence the conclusion. 
\end{proof}

\begin{corollary}
Two vector bundles are isomorphic if and only if their Lie algebras of homogeneous differential operators are isomorphic (as Lie algebras).
\end{corollary}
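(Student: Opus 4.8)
The plan is to reduce the statement to Corollary~\ref{corollary: iso from Pol}, which already characterizes vector bundles through their $\mathbb{R}$-algebras of fiberwise polynomial functions. Everything then hinges on bridging the Lie algebra of homogeneous differential operators and the associative algebra ${\rm Pol}$. The easy half is the forward implication: an isomorphism of vector bundles induces (by pullback) a graded $\mathbb{R}$-algebra isomorphism ${\rm Pol}(\pi) \cong {\rm Pol}(\eta)$, and conjugation by this isomorphism carries homogeneous differential operators to homogeneous differential operators while preserving both the weight (the shift in polynomial degree) and the commutator bracket; hence the associated Lie algebras are isomorphic.

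The content lies in the converse. Write $\mathcal{D}(\pi)$ for the graded Lie algebra of homogeneous differential operators acting on ${\rm Pol}(\pi)$. First I would recover the grading intrinsically: the Euler operator $\mathcal{E}$ is characterized, up to scalar, as the distinguished degree-counting element satisfying $[\mathcal{E},D] = w\,D$ for every weight-$w$ operator $D$, so that any Lie-algebra isomorphism $\mathcal{D}(\pi)\to\mathcal{D}(\eta)$ can be normalized to respect weights. Next I would locate ${\rm Pol}(\pi)$ as a \emph{vector space} inside $\mathcal{D}(\pi)$: the order-zero operators are exactly the multiplication operators $m_f$, and they can be singled out as those elements annihilated by enough iterated brackets with the negative-weight (fiber-derivation) operators. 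This identifies the underlying graded vector space of ${\rm Pol}$ together with the $C^\infty(M)$-module structure carried by its weight-zero part.

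The hard part — and the main obstacle — is to reconstruct the \emph{commutative associative product} of ${\rm Pol}$ from the bracket alone, since the multiplication operators commute pairwise and so are invisible to one another through the Lie structure. The device is that each weight $-1$ first-order operator $\partial$ acts on the order-zero part as a derivation, $[\partial,m_f]=m_{\partial f}$, and the Leibniz rule $\partial(fg)=(\partial f)g+f(\partial g)$ lets one pin down $m_{fg}$ as the unique order-zero operator whose bracket with every such $\partial$ equals $(\partial f)\,m_g+(\partial g)\,m_f$. Carrying this out intrinsically (as in the core argument of \cite{llz}, without ever naming $f$ and $g$ as functions) reconstructs the product on ${\rm Pol}$ purely from the graded Lie data, and thus promotes the given isomorphism $\mathcal{D}(\pi)\cong\mathcal{D}(\eta)$ to an $\mathbb{R}$-algebra isomorphism ${\rm Pol}(\pi)\cong{\rm Pol}(\eta)$. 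Once this is in hand, Corollary~\ref{corollary: iso from Pol} immediately yields an isomorphism of the vector bundles, with no restriction on rank or on the base manifold; I expect the verification that the reconstruction of the product is compatible with a general (possibly base-changing) Lie isomorphism to require the same identification of $C^\infty(M)$ and the induced diffeomorphism of bases used in the proof of that corollary.
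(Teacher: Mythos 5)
Your proposal is correct and follows the same route as the paper: reduce the statement to Corollary~\ref{corollary: iso from Pol} by translating a Lie-algebra isomorphism of homogeneous differential operators into an $\mathbb{R}$-algebra isomorphism of fiberwise polynomial functions. The paper simply invokes \cite[Proposition 4]{llz} as a black box for that translation, whereas you sketch its internal mechanism (Euler operator, multiplication operators, Leibniz-rule reconstruction of the product) — which is exactly the content of the cited result.
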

\begin{proof}
It follows immediately from \cite[Proposition 4]{llz} and Corollary \ref{corollary: iso from Pol}.
\end{proof}

\appendix %

\section{Torsors}\label{section:torsors in category}

\subsection{Torsors under the action of groups}

\begin{definition}
Let $G$ be a group. A \emph{torsor} under the action of $G$ is a (nonempty) set $X$ on which $G$ acts freely and transitively, i.e. together with a map
\[
\gamma : G \times X \to X : (g,x) \mapsto g \cdot x
\]
such that for any two $x, y \in X$, there is a unique $g \in G$ such that $y = g \cdot x$. 
\bigskip

Asking for the action to be free and transitive amounts to require that the map
\[
\gamma \times {\rm id}_X : G \times X \to X \times X : (g,x) \mapsto (g \cdot x, x)
\]
is a bijection (i.e. an isomorphism in the category of sets).
\end{definition}

Torsors being spaces endowed with a group action, there are natural candidates for morphisms~: equivariant maps.

\begin{definition}
Let $X$ and $X'$ be two torsors under the action of $G$. A map $f : X \to X'$ is a \emph{$G$-morphism of torsors} if it is $G$-equivariant, i.e. for any $x \in X$ and any $g \in g$,
\[
f(g \cdot x) = g \cdot f(x) \; .
\]
In other words, we ask for the following diagram to be commutative :
\[
\xymatrix{
G \times X \ar[rr]^{\gamma} \ar[d]_{{\rm id}_G \times f} && X \ar[d]^{f} \\
G \times X \ar[rr]_{\gamma'}                        && X' }
\]
\end{definition}

More generally, we can define morphisms of torsors under the actions of two {different} groups. 

\begin{definition}
Let $(X, \gamma)$ and $(X', \gamma')$ be torsors under the actions of $G$ and $G'$, respectively. A map $f : X \to X'$ is a \emph{morphism of torsors} if there is a group homomorphism $\vec{f} : G \to G'$ such that the induced action of $G$ on $X'$, 
  \[
  G \times X' \to X' : (g,x') \mapsto \vec{f}(g) \cdot x'
  \]
  turns $f : X \to X'$ into a $G$-morphism of torsors. 
\end{definition}

\subsection{Group objects in a category}\label{subsection: group objects}%

Let ${\mathcal C}$ be a category with finite products (i.e. ${\mathcal C}$ has a terminal object ${\bf 1}$ and any two objects of ${\mathcal C}$ have a product). A \emph{group object} in ${\mathcal C}$ is an object $G$ in ${\mathcal C}$
together with three morphisms:--
\begin{itemize}
  \item the \emph{group multiplication} $m : G \times G \to G$ ;
 
  \item the \emph{inclusion of the unit element} $\epsilon : {\bf 1} \to G$ ;
  
  \item the \emph{inversion operation} ${\rm inv}: G \to G$.
\end{itemize}
These morphisms must fulfill the following requirements:
\begin{itemize}
  \item mutliplication is associative, i.e. $m \circ (m \times {\rm id}_G) = m \circ ({\rm id}_G \times m)$ as morphisms $G \times G \times G \to G$(\footnote{We identify $G \times (G \times G)$ with $(G \times G) \times G$ in a canonical manner.});
  
  \item inclusion of the unit is a two-sided unit of $m$, i.e. $m \circ ({\rm id}_G \times \epsilon) = {\rm pr}_1$, where ${\rm pr}_1 : G \times {\bf 1} \to G$ is the canonical projection, and $m \circ (\epsilon \times {\rm id}_G) = {\rm pr}_2$, where ${\rm pr}_2 : {\bf 1} \times G \to G$ is the canonical projection;
  
  \item inversion operation is a two-sided inverse for $m$, i.e. if $d : G \to G
  \times G$ is the diagonal map, and $\epsilon_G : G \to G$ is the composition
  of the unique morphism $G \to {\bf 1}$ (also called the counit) with $\epsilon$,
  then $m \circ ({\rm id}_G \times {\rm inv}) \circ d = \epsilon_G$ and $m \circ ({\rm inv} \times {\rm id}_G) \circ d = \epsilon_G$.
\end{itemize}


\subsection{Torsors under the action of group objects}%

Let $G$ be a group object in ${\mathcal C}$. Let $X$ be an object in ${\mathcal C}$. A \emph{group action} of $G$ on $X$ is a morphism $\gamma : G \times X \to X $ in ${\mathcal C}$ such that $\gamma \circ (\epsilon \times {\rm id}_X) = {\rm id}_X$ and $\gamma \circ (m \times {\rm id}_X) =  \gamma \circ ({\rm id}_G \times \gamma)$ as morphisms $G \times G \times X \to G$.(\footnote{Again we identify $G \times (G \times X)$ with $(G \times G) \times X$ in a canonical manner})

\begin{definition}
A \emph{torsor under the action of $G$} is a nonempty object $X$ on which $G$ acts freely and transitively, i.e. the induced map $\gamma \times {\rm id}_X : G \times X \to X \times X$ is an isomorphism in ${\mathcal C}$.
\end{definition}

\begin{definition}
Let $X$ and $X'$ be torsors under the action of a group object $G$. A morphism $f \in {\rm Hom}_{\mathcal C}(X,X')$ is a \emph{$G$-morphism of torsors} if $\gamma' \circ ({\rm id}_G \times f) = f \circ \gamma$ as morphisms $G \times X \to X'$, i.e., if the following diagram commutes:
\[
\xymatrix{
G \times X \ar[rr]^{\gamma} \ar[d]_{{\rm id}_G \times f}  && X \ar[d]^{f} \\
G \times X' \ar[rr]_{\gamma'}                        && X' }
\]
\end{definition}

\section{Fiber bundles}

Affine spaces are particular torsors in the category of sets. Affine bundles will be particular torsors in the category of fiber bundles. Let us recall some basic notion about this category.

\subsection{The category of fiber bundles over a (fixed) manifold}%

Let $M$ be a connected $m$-dimensional smooth manifold. Remember that a \emph{fiber bundle} over $M$ is a surjective smooth submersion $\pi : E \to M$ such that 
\begin{enumerate}
  \item there is a smooth manifold $F$ (the \emph{typical fiber}) such
  that for any $x \in M$, $\pi_x := \pi^{-1}(x)$ is diffeomorphic to $F$ ;
  \item any point $x \in M$ has an open neighboorhood $U$ such that there is a
  diffeomorphism $\Phi_U : \pi^{-1}(U) \to U \times F$ making commutative the
  diagram below:
  \[
  \xymatrix{
  \pi^{-1}(U) \ar[rr]^{\Phi_U} \ar[dr]_{\pi} && U \times F \ar[dl]^{{\rm pr}_1} \\
  & U &
  }
  \]
\end{enumerate}


An \emph{$M$-morphism of fiber bundles} from $\pi : E \to M$ to $\pi' : E' \to M$ is a smooth map $f : E \to E'$ that preserves fibers, i.e., $f(\pi_x) \subset \pi'_{f(x)}$for all $x \in M$. 
Fiber bundles over $M$ and $M$-morphisms between them form a category, denoted by ${\rm FB}(M)$.

%
%

\subsection{Group objects and torsors in ${\rm FB}(M)$}%

A \emph{group object} in ${\rm FB}(M)$ is nothing but a fiber bundle $\eta : E \to M$ endowed with 
\begin{itemize}
  \item a fiber-preserving smooth map $m : E \times_M E \to E$, 
  \item a fiber-preserving smooth map ${\rm inv} : E \to E$,
  \item a global section $\epsilon : M \to E$,
\end{itemize}
so that on each fiber $\eta_x$ ($x \in M$), the induced triple $(m_x, {\rm inv}_x, \epsilon_x)$ defines a group structure.(\footnote{Note that the {fibered product} $\times_M$ of fiber bundles over $M$ defines \emph{product objects} while the trivial bundle ${\rm id}_M : M \to M$ is a \emph{terminal object} in the category ${\rm FB}(M)$, so that we are in the conditions of Paragraph \ref{subsection: group objects}.})


\begin{remark}
Note that group objects in ${\rm FB}(M)$ are \emph{not} Lie groups since only elements in the same fiber can be multiplied with each other.
\end{remark}

A \emph{group action} of a group object $\eta : E \to M$ on an arbitrary fiber bundle $\pi : Z \to M$ is then an $M$-morphism
\[
\gamma : E \times_M Z \to Z : (e,z) \mapsto e \cdot z
\]
whose restriction to any fiber defines a group action. Finally, a fiber bundle $\pi : Z \to M$ is a \emph{torsor} under $\eta : E \to M$ if the induced map
\[
\gamma \times {\rm id}_Z : E \times_M Z \to Z \times_M Z
\]
is an $M$-isomorphism of fiber bundles (at the level of fibers, this corresponds to asking for all the actions to be free and transitive). 

\begin{example}[Principal bundles]
Let $G$ be a Lie group. A \emph{principal $G$-bundle} is a fiber bundle $\pi : P \to M$ with typical fiber $G$, together with a smooth right action $P \times G \to P$ that is fiber-preserving, free and transitive. 
Nontrivial principal $G$-bundles are \emph{not} group objects in ${\rm FB}(M)$, but rather torsors under the (right) action of the trivial fiber bundle ${\rm pr}_1 : M \times G \to M$. 
In particular, any fiber of a $G$-principal bundle is a torsor (in the category of smooth manifolds) under the action of the Lie group $G$.
\end{example}

%
%
%


\bibliographystyle{abbrv}
\bibliography{biblio}

\def\cprime{$'$}
\begin{thebibliography}{10}

\bibitem{b}
S.~Benenti.
\newblock Fibr\'es affines canoniques et m\'ecanique newtonienne.
\newblock In {\em Action hamiltoniennes de groupes. {T}roisi\`eme th\'eor\`eme
  de {L}ie ({L}yon, 1986)}, volume~27 of {\em Travaux en Cours}, pages 13--37.
  Hermann, Paris, 1988.

\bibitem{bt}
S.~Benenti and W.~M. Tulczyjew.
\newblock Canonical affine bundles.
\newblock {\em Atti Accad. Sci. Torino Cl. Sci. Fis. Mat. Natur.},
  122(3-4):89--100, 1988.

\bibitem{fga}
B.~Fantechi, L.~G{\"o}ttsche, L.~Illusie, S.~L. Kleiman, N.~Nitsure, and
  A.~Vistoli.
\newblock {\em Fundamental algebraic geometry}, volume 123 of {\em Mathematical
  Surveys and Monographs}.
\newblock American Mathematical Society, Providence, RI, 2005.
\newblock Grothendieck's FGA explained.

\bibitem{giraud}
J.~Giraud.
\newblock {\em Cohomologie non ab\'elienne}.
\newblock Springer-Verlag, Berlin, 1971.
\newblock Die Grundlehren der mathematischen Wissenschaften, Band 179.

\bibitem{ggu3}
K.~Grabowska, J.~Grabowski, and P.~Urba{\'n}ski.
\newblock A{V}-differential geometry: {P}oisson and {J}acobi structures.
\newblock {\em J. Geom. Phys.}, 52(4):398--446, 2004.

\bibitem{ggu}
K.~Grabowska, J.~Grabowski, and P.~Urba{\'n}ski.
\newblock Frame-independent mechanics: geometry on affine bundles.
\newblock In {\em Travaux math\'ematiques. {F}asc. {XVI}}, Trav. Math., XVI,
  pages 107--120. Univ. Luxemb., Luxembourg, 2005.

\bibitem{ggu2}
K.~Grabowska, J.~Grabowski, and P.~Urba{\'n}ski.
\newblock A{V}-differential geometry: {E}uler-{L}agrange equations.
\newblock {\em J. Geom. Phys.}, 57(10):1984--1998, 2007.

\bibitem{gur}
J.~Grabowski, P.~Urba{\'n}ski, and M.~Rotkiewicz.
\newblock Double affine bundles.
\newblock {\em J. Geom. Phys.}, 60(4):581--598, 2010.

\bibitem{gm}
X.~Gr{\`a}cia and R.~Mart{\'{\i}}n.
\newblock Vector hulls of affine spaces and affine bundles.
\newblock {\em Acta Appl. Math.}, 103(1):101--129, 2008.

\bibitem{MR2415174}
X.~Gr{\`a}cia and R.~Mart{\'{\i}}n.
\newblock Vector hulls of affine spaces and affine bundles.
\newblock {\em Acta Appl. Math.}, 103(1):101--129, 2008.

\bibitem{Husemoller}
D.~Husem{\"o}ller, M.~Joachim, B.~Jur{\v{c}}o, and M.~Schottenloher.
\newblock {\em Basic bundle theory and {$K$}-cohomology invariants}, volume 726
  of {\em Lecture Notes in Physics}.
\newblock Springer, Berlin, 2008.
\newblock With contributions by Siegfried Echterhoff, Stefan Fredenhagen and
  Bernhard Kr{\"o}tz.

\bibitem{llz}
P.~B.~A. Lecomte, T.~Leuther, and E.~Zihindula.
\newblock On a lie algebraic characterization of vector bundles.
\newblock {\em SIGMA 8 (2012)}, 004:10 pages, 2012.

\bibitem{mms}
E.~Mart{\'{\i}}nez, T.~Mestdag, and W.~Sarlet.
\newblock Lie algebroid structures and {L}agrangian systems on affine bundles.
\newblock {\em J. Geom. Phys.}, 44(1):70--95, 2002.

\bibitem{m}
T.~Mestdag.
\newblock Generalized connections on affine bundles.
\newblock In {\em Global analysis and applied mathematics}, volume 729 of {\em
  AIP Conf. Proc.}, pages 232--239. Amer. Inst. Phys., Melville, NY, 2004.

\bibitem{nestruev}
J.~Nestruev.
\newblock {\em Smooth manifolds and observables}, volume 220 of {\em Graduate
  Texts in Mathematics}.
\newblock Springer-Verlag, New York, 2003.
\newblock Joint work of A. M. Astashov, A. B. Bocharov, S. V. Duzhin, A. B.
  Sossinsky, A. M. Vinogradov and M. M. Vinogradov, Translated from the 2000
  Russian edition by Sossinsky, I. S. Krasil{\cprime}schik and Duzhin.

\bibitem{popescu2}
P.~Popescu and M.~Popescu.
\newblock Lagrangians and {H}amiltonians on affine bundles and higher order
  geometry.
\newblock In {\em Geometry and topology of manifolds}, volume~76 of {\em Banach
  Center Publ.}, pages 451--469. Polish Acad. Sci., Warsaw, 2007.

\bibitem{popescu}
P.~Popescu and M.~Popescu.
\newblock Sub-{L}agrangians and sub-{H}amiltonians on affine bundles.
\newblock {\em J. Math. Sci. (N. Y.)}, 161(2):261--282, 2009.
\newblock Optimal control.

\bibitem{saunders}
D.~J. Saunders.
\newblock {\em The geometry of jet bundles}, volume 142 of {\em London
  Mathematical Society Lecture Note Series}.
\newblock Cambridge University Press, Cambridge, 1989.

\bibitem{steenrod}
N.~Steenrod.
\newblock {\em The {T}opology of {F}ibre {B}undles}.
\newblock Princeton Mathematical Series, vol. 14. Princeton University Press,
  Princeton, N. J., 1951.

\bibitem{swan}
R.~G. Swan.
\newblock Vector bundles and projective modules.
\newblock {\em Trans. Amer. Math. Soc.}, 105:264--277, 1962.

\bibitem{u}
P.~Urba{\'n}ski.
\newblock An affine framework for analytical mechanics.
\newblock In {\em Classical and quantum integrability ({W}arsaw, 2001)},
  volume~59 of {\em Banach Center Publ.}, pages 257--279. Polish Acad. Sci.,
  Warsaw, 2003.

\end{thebibliography}

\end{document}